\documentclass{article}
\usepackage[margin=1.5in]{geometry}
\usepackage{graphicx} 
\usepackage{amsthm}
\usepackage{amssymb}
\usepackage{amsmath} 
\usepackage{mathtools}
\usepackage{xcolor}
\usepackage{enumerate}
\usepackage{microtype}
\usepackage[backend=bibtex,natbib,style=numeric,sorting=nyt]{biblatex}
\usepackage{hyperref}
\newcommand{\dens}{d}

\newtheorem{thm}{Theorem}

\newtheorem{lemma}[thm]{Lemma}

\newtheorem{cor}[thm]{Corollary}
\newtheorem{problem}{Problem}

\def\inst#1{$^{#1}$}

\title{A General Upper Bound on Multicolor \\Ordered Ramsey Numbers\thanks{M. Balko was supported by grant no. 23-04949X of the Czech Science Foundation (GA\v{C}R) and by the Center for Foundations of Modern Computer Science (Charles Univ. project UNCE 24/SCI/008).
}}

\author{
Martin Balko\inst{1}
\and
Kl\'{a}ra Grinerov\'{a}\inst{1}
}

\begin{document}

\maketitle

\begin{center}

{\footnotesize
\inst{1} 
Department of Applied Mathematics, \\
Faculty of Mathematics and Physics, Charles University, Czech Republic \\
\texttt{\string{balko,klarka\string}@kam.mff.cuni.cz}
}

\end{center}

\begin{abstract}
We provide a general upper bound on multicolor ordered Ramsey numbers in terms of the interval chromatic number and the degeneracy of an ordered graph. 
We extend previous results by Conlon, Fox, Lee, and Sudakov (2017) by showing that for every $n$-vertex ordered graph $G^<$ with degeneracy $d\geq2$, and interval chromatic number $\chi$, its $q$-color ordered Ramsey number satisfies $r_<(G^<;q) \in n^{O(d^{q-1}\lceil \log{\chi}\rceil^{q-1})}$ for every $q \geq 2$. 
For fixed parameters $q,d,\chi$, the resulting estimate is polynomial in $n$. 
For triangle-free ordered graphs $G^<$, we also provide the stronger estimate $n^{O(q^2d {\lceil \log \chi \rceil}^{q-1})}$.
It also follows from a recent result by Li (2026) that our upper bound is almost tight for ordered matchings.
\end{abstract}

\section{Introduction}

Ramsey theory is an active area of research, based on the idea that every sufficiently large structure contains a well-organized substructure. 
A cornerstone result is the \emph{Ramsey theorem} stating that, for every integer $q \geq 2$ and every graph $G$, there exists a positive integer $N$ such that every $q$-coloring of the edges of the complete graph $K_N$ with colors from $[q]=\{1,\dots,q\}$ contains a monochromatic copy of $G$. 
The smallest such $N$ is called the \emph{multicolor Ramsey number} of $G$ and is denoted by $r(G;q)$.

The growth rate of Ramsey numbers is extensively studied.
The classical bound $r(K_n;2)\leq4^n$ on Ramsey numbers of complete graphs~\cite{erdosszekeres1935} was improved over the years only by subexponential factors~\cite{conlon2009upper,thomason1988}. 
A breakthrough was achieved by Campos, Griffiths, Morris, and Sahasrabudhe~\cite{campos2023exponential}, who proved that $r(K_n;2)\leq(4-\varepsilon)^n$ for some constant $\varepsilon>0$ and all sufficiently large $n$, giving the first exponential improvement of the upper bound. Gupta, Ndiaye, Norin, and Wei~\cite{gnnw24} subsequently improved this to
\[
r(K_n;2)\leq(4e^{-0.14/e})^{n+o(n)},
\]
where $4e^{-0.14/e}\approx3.7992$. 

Since every graph $G$ on $n$ vertices is a subgraph of $K_n$, we have $r(G;2)\leq r(K_n;2)$. 
This bound is of the correct exponential order for sufficiently dense graphs. 
However, for sparse graphs, the situation is very different. Chv\'{a}tal, R\"{o}dl, Szemer\'{e}di, and Trotter~\cite{chvatal1983bounded} showed that graphs with bounded maximum degree have Ramsey numbers that grow only linearly in the number of vertices. 
More generally, the Burr--Erd\H{o}s conjecture~\cite{burr1975}, verified by Lee~\cite{lee17}, states that for every $d$ there exists a constant $c=c(d)$ such that every $d$-degenerate graph $G$ on $n$ vertices satisfies $r(G;2)\leq cn$. 

In the multicolor case, Ramsey numbers are even less understood, but there has been substantial progress recently. 
For $q\geq3$, the classical general upper bound is $r(K_n;q)<q^{qn}$. 
Balister, Bollob\'{a}s, Campos, Griffiths, Hurley, Morris, Sahasrabudhe, and Tiba~\cite{balister2024} obtained an exponential improvement for every fixed $q\geq2$.
These were further improved~\cite{yangmao2026} and currently the best bound~\cite{jo2026} gives
\[
r(K_n;q)\leq \exp\left(-\frac{cn}{q(\log(2q))^2}\right)q^{qn}
\]
for $q\geq2$ and $n\geq Cq^2(\log(2q))^6$, with suitable absolute constants $c,C>0$.

There has also been very exciting progress on multicolor lower bounds. 
A breakthrough by Conlon and Ferber~\cite{conlon2021multicolorlower} was followed by several further improvements~\cite{avm26,camPo6,sawin2022lower,wigderson2021lower}, until, recently, Brada\v{c}~\cite{bradac2026} proved that for every fixed $q\geq3$,
\[
r(K_n;q)\geq c_q2^{(q-1)n/2}
\]
for some constant $c_q>0$ and all sufficiently large $n$.
For multicolor Ramsey numbers of general graphs, we note that the result of Lee~\cite{lee17} about Ramsey numbers of $d$-degenerate graphs extends to an arbitrary number of colors.

Motivated by these recent breakthroughs, we study the multicolor \emph{ordered Ramsey numbers}, which are a natural variant where the vertices are equipped with a linear order. While ordered Ramsey numbers have been studied quite extensively in the two-color case, there are only sporadic results for more colors.

In this paper, we obtain general upper bounds in the multicolor setting.
In particular, we obtain polynomial estimates for ordered graphs with bounded degeneracy and bounded interval chromatic number, extending two-color bounds by Conlon, Fox, Lee, and Sudakov~\cite{conlon2017ordered}.
This gives a first general nontrivial upper bound on multicolor ordered Ramsey numbers.

\section{Preliminaries}

All logarithms are to base two. We omit ceiling and floor signs whenever they are not crucial.
To avoid trivial cases, we consider only graphs with at least one edge in our estimates.

An \emph{ordered graph} $G^<$ is a graph with a linear order on its vertex set. 
Let $H^<$ and $G^<$ be ordered graphs. We say that $H^<$ is an \emph{ordered subgraph} of $G^<$ if there is an increasing injection $f\colon V(H)\to V(G)$ such that $f(u)f(v)\in E(G)$ whenever $uv\in E(H)$.

The \emph{interval chromatic number} of an ordered graph $G^<$ is the minimum number of intervals into which the vertex set of $G^<$ can be partitioned so that no two vertices in the same interval are adjacent. Further, for a nonnegative integer $d$, a graph $G$ is called \emph{$d$-degenerate} if every nonempty subgraph of $G$ contains a vertex of degree at most $d$. The \emph{degeneracy} of $G$ is the smallest $d$ such that $G$ is $d$-degenerate. A \emph{degeneracy ordering} $v_1,\dots,v_n$ is an ordering in which each vertex has at most $d$ earlier neighbors; it need not agree with the given vertex order.
The \emph{density} between two nonempty disjoint sets $A$ and $B$ of vertices is the number of edges of $G$ between $A$ and $B$ divided by $|A||B|$.

We write $K_m^<(\chi)$ for the complete $\chi$-partite ordered graph with $m$ vertices in each part, where the parts are consecutive intervals. Thus, $K_m^<(2)=K_{m,m}^<$. For vertex sets, $W_1<\cdots<W_r$ means that every vertex of $W_i$ precedes every vertex of $W_j$ whenever $i<j$. Such sets need not be intervals in the original vertex set, and we call them \emph{ordered sets}.

Let $q \ge 2$ be an integer and let $G^<_1,\dots,G^<_q$ be ordered graphs. The $q$-color \emph{ordered Ramsey number} $r_<(G^<_1,\dots,G^<_q)$ is the smallest integer $N$ such that every coloring of the edges of $K^<_N$ with $q$ colors contains a monochromatic ordered copy of $G^<_i$ in color $i$ for some $i \in [q]$. If all graphs are the same, we write $r_<(G^<;q)$. In the two-color case, we write $r_<(G^<)$.
It is easy to see that multicolor ordered Ramsey numbers are between the classical Ramsey numbers~\cite{balko2020ordered}. 
In particular, all multicolor ordered Ramsey numbers are finite and thus well-defined.

\section{Related Work}

Ordered Ramsey numbers for two colors have been studied a lot in recent years; see, for example,~\cite{balko2019,balko2020ordered,balko2019bounded,balko2024,conlon2017ordered,gishboliner2024,rohatgi2019}. For more than two colors, there are only a few results, mostly for special types of ordered graphs; see, for example,~\cite{balko2020ordered,falgas23,geneson2019,girao24,moshkovitz2012,mubayi17}. 
In this paper, we study the multicolor case for general ordered graphs with given degeneracy and interval chromatic number.

In the two-color setting, Conlon, Fox, Lee, and Sudakov~\cite{conlon2017ordered} proved a general upper bound for ordered graphs in terms of their interval chromatic number and degeneracy. In particular, if $G^<$ is an ordered $d$-degenerate graph on $n$ vertices with interval chromatic number $\chi$, then
\begin{equation}
\label{eq_2}
r_<(G^<;2) \leq n^{32d\log \chi}.
\end{equation}
That implies that ordered graphs with bounded degeneracy and bounded interval chromatic number have two-color ordered Ramsey numbers at most polynomial in $n$. In particular, for some sparse graphs, the ordered Ramsey number is much smaller than the general exponential bound that follows from classical Ramsey theory. 
On the other hand, ordered Ramsey numbers of sparse ordered graphs can grow much faster than ordinary Ramsey numbers, as there are ordered $n$-vertex 1-regular graphs $M^<$ with $r_<(M^<;2) \in n^{\Omega(\log{n}/\log{\log{n}})}$~\cite{balko2020ordered,conlon2017ordered}.

In the multicolor setting, much less is known. 
For example, Conlon, Fox, Lee, and Sudakov~\cite{conlon2017ordered} proved that
\begin{equation}
\label{eq_3}
r_<(M^<;q) \leq n^{(2 \log n)^{q-1}}
\end{equation}
for every $n$-vertex ordered graph $M^<$ of maximum degree one.
To complement this bound, very recently, Li~\cite{li26} showed that  for every integer $q \geq 2$, there is a constant $c_q > 0$ such that almost every 1-regular ordered graph $M^<$ on $n$ vertices, as even $n$ tends to infinity, satisfies
\begin{equation}
\label{eq-li}
r_<(M^<;q) \geq n^{c_q(\log{n})^{q-1}/(\log{\log{n}})^{q-1}},
\end{equation}
negatively answering a problem by Conlon, Fox, Lee, and Sudakov~\cite{conlon2017ordered} for $q \geq 3$.
In our previous work~\cite{clanek}, for every $q\geq2$ and every 1-regular ordered graph on $n$ vertices with interval chromatic number two, we proved the stronger bound $r_<(M^<;q)\leq4(n/2)^q$.

More generally, we also proved that every ordered $d$-degenerate graph $G^<$ on $n$ vertices with interval chromatic number two satisfies
\begin{equation}
\label{eq_1}
r_<(G^<; q) \leq n^{\frac{(d + 2)(d + 1)^{q - 1} - 2}{d}} \in n^{O((d+1)^{q-1})}.
\end{equation}
Concerning lower bounds, we proved~\cite{clanek} that for every integer $q\geq 2$, there exist $c_q > 0$ such that for every $d$ and every sufficiently large integer $n$ there is an ordered $d$-regular graph $G^<$ on $2n$ vertices with interval chromatic number two satisfying
\begin{equation}
\label{eq-lowerBound}
r_<(G^<;q) \geq c_q \frac{(dn)^{q/4}}{(\log dn)^q}.
\end{equation}

Unfortunately, in this case, there is a huge gap between the lower bound and the upper bound. 
Also, these results apply only to ordered graphs with interval chromatic number two or maximum degree one. 
Our new results make progress in this direction by giving upper bounds for ordered graphs with arbitrary interval chromatic number.

\section{Our Results}

We first give an improved upper bound on multicolor ordered Ramsey numbers of ordered graphs with bounded degeneracy and interval chromatic number two.

\begin{thm}
\label{thm-bipartite}
For all integers $d \geq 1$ and $q \geq 2$, every ordered $d$-degenerate graph $H^<$ on $n$ vertices with interval chromatic number two and maximum degree $\Delta$ satisfies
\[r_<(H^<;q) \leq (\Delta^{3/2} n)^{(1+o(1))dq^2 } ,\]
where $o(1)\to0$ as $n\to\infty$, uniformly in the other parameters.
\end{thm}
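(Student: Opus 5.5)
We will prove Theorem~\ref{thm-bipartite} by dependent random choice, combined with an iterative density-increment over the $q$ colors, in the spirit of the two-color argument of Conlon, Fox, Lee, and Sudakov~\cite{conlon2017ordered}. Write $H^<$ with intervals $U<V$, $|U|,|V|\le n$. The key embedding lemma we aim for reads as follows. Suppose $A<B$ are ordered sets and the color-$i$ bipartite graph between them has density at least $\varepsilon$. Suppose also that $|A|,|B|\ge (\Delta^{3/2}n/\varepsilon)^{Cd}$. Then $H^<$ embeds in color $i$, with $U$ mapped into $A$ and $V$ mapped into $B$.

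To prove the lemma we use dependent random choice. Pick $t\approx d$ random vertices in $B$ and let $A'\subseteq A$ be their common color-$i$ neighborhood. In expectation $|A'|\ge \varepsilon^t|A|$, and few $d$-subsets of $A'$ have fewer than $m$ common neighbors in $B$. After deleting one vertex from each bad subset, every $d$-set of $A'$ has at least $m\ge$ (number of vertices of $V$) $\cdot\,\Delta$ common neighbors in $B$. The embedding is then done in order. Vertices of $U$ are mapped into $A'$ increasingly; to keep the order we choose them spaced out, using $|A'|\ge n$. Vertices of $V$ are handled with degeneracy: each has at most $d$ neighbors in $U$ when $V$ is ordered suitably (we may orient degeneracy so that $V$-vertices have $\le d$ back-neighbors; otherwise we symmetrize using the $\Delta$ factor). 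Each $V$-vertex is placed among the common neighbors of its images, avoiding used vertices.

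The main obstacle in this step is preserving the order inside $B$, since common neighborhoods are not intervals. We handle it by partitioning $B$ into $\Delta n$ consecutive blocks and requiring $d$-sets to have many common neighbors in each block. This costs a factor polynomial in $\Delta n$ in $|B|$ and is the source of the $\Delta^{3/2}$ term; a Cauchy--Schwarz/Kővári--Sós--Turán step in the deletion count gives the exponent $3/2$.

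With the lemma in hand, the multicolor argument runs as follows. Take $N=(\Delta^{3/2}n)^{(1+o(1))dq^2}$, and split $K_N^<$ into a left half $A$ and a right half $B$. Some color has density at least $1/q$ between $A$ and $B$. If $|A|,|B|$ exceed the threshold $(q\Delta^{3/2}n)^{Cd}$, the lemma embeds $H^<$. The number of multiplicative losses is where the exponent $dq^2$ comes from: each color in each of up to $q$ rounds costs a factor $(\Delta^{3/2}n)^{d}$. To get the sharp constant $1+o(1)$ we set $t=d(1+o(1))$ and choose $m$ slightly above $n\Delta$. In this way the $q$-dependence $q^{O(d)}$ is absorbed into the $o(1)$ for large $n$, except through the $q^2$ exponent. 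Where a cleaner bound is needed, we follow the paper's earlier bound \eqref{eq_1} and set up an induction on $q$: when some color is sparse between suitable sub-blocks, we pass to a large ordered pair avoiding that color and recurse with $q-1$ colors. The recursion $N_q\le (\Delta^{3/2}n)^{O(dq)}N_{q-1}$ then yields $N_q\le(\Delta^{3/2}n)^{(1+o(1))dq^2}$.
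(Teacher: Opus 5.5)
\textbf{There is a genuine gap: your key embedding lemma is false.} The lemma says that a single color of density at least $\varepsilon$ between ordered sets $A<B$ with $|A|,|B|\geq(\Delta^{3/2}n/\varepsilon)^{Cd}$ already contains an ordered copy of $H^<$. This is not merely unproved; it would give far more than the theorem. Halve $K_N^<$ and take a color of density at least $1/q$ between the halves. The lemma then yields $r_<(H^<;q)\leq 2(q\Delta^{3/2}n)^{Cd}$, whose exponent of $n$ does not grow with $q$. So your later accounting of ``$q$ rounds'' and the exponent $dq^2$ is inconsistent with the lemma itself. For ordered matchings with interval chromatic number two ($d=\Delta=1$), this contradicts the lower bound~\eqref{eq-lowerBound}, which gives $r_<(M^<;q)\geq c_q n^{q/4}/(\log n)^q$, as soon as $q>4C$.

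In the ordered setting, global density is not enough, and your proof of the lemma breaks exactly at the ordering issue you flag. Dependent random choice makes $d$-subsets of $A'$ have many common neighbors in $B$. Nothing forces these neighborhoods to meet each of the $\Delta n$ consecutive blocks of $B$. For example, cut $A$ and $B$ into intervals $A_1<\cdots<A_r$ and $B_1<\cdots<B_r$ and join $A_j$ only to $B_{r+1-j}$. Then every common neighborhood lies inside a single $B_{r+1-j}$, so the images of $V$ cannot be placed in the required order relative to the already fixed images of $U$. Moreover, $d$-degeneracy does not bound the number of neighbors in $U$ of a vertex of $V$: a star centered in $V$ is $1$-degenerate. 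Replacing $d$ by $\Delta$ would put $\Delta$ rather than $d$ into the exponent.

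\textbf{The fallback induction is missing its mechanism.} The induction on colors you sketch at the end has the right shape, and it is what the paper does. But the step ``pass to a large ordered pair avoiding that color and recurse'' is where all the work lies, and you supply no mechanism for it. Earlier colors do not disappear; they are only sparse, and a sub-pair of a sparse pair can be dense. So when you search for a pair that is sparse in color $i$, you must keep colors $1,\dots,i-1$ sparse as well. The paper does this in four steps.
\begin{enumerate}
\item It starts from the uniform-density statement, Lemma~\ref{lemma_conlon_sets}: no red $H^<$ implies a large pair with low red density. This is the correct replacement for your lemma.
\item It splits the current pair into $n$ ordered blocks per side with all cross densities controlled (Lemma~\ref{lemma_divide_into_intervals_sparse}). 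Assigning one block to each vertex of $H^<$ takes care of the ordering.
\item It runs a greedy embedding of $H^<$ in color $i$ along a degeneracy ordering (Lemma~\ref{lemma_one_step_density_coloring}). Throughout, it maintains the weighted potential $P_t$, which bounds the density of the old colors between candidate sets. If the embedding fails, the result is a pair sparse in colors $1,\dots,i$ simultaneously, with the density bound multiplied by $64\Delta^3n^2$.
\item The last color is handled by near-completeness rather than density: Lemma~\ref{lem_conlon_final} gives $K^<_{n,n}\supseteq H^<$.
\end{enumerate}
The exponent $dq^2$ comes from starting with a sparsity $c_1$ small enough to survive $q-2$ such amplifications. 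The factor $\Delta^3$ comes from the pigeonhole over $I_t$, the bound $|E(H)|\leq n\Delta/2$, and $d+1\leq 2\Delta$, not from a K\H{o}v\'ari--S\'os--Tur\'an count.
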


In fact, we prove a stronger statement as we give an estimate on the off-diagonal multicolor ordered Ramsey numbers where one of the ordered graphs is replaced by an ordered complete multipartite graph; see Theorem~\ref{thm_d_degenerated_bipartite}. For fixed $d$, the estimate from Theorem~\ref{thm-bipartite}, together with $\Delta\leq n$, improves the dependence on $q$ in the exponent from exponential to quadratic compared with~\eqref{eq_1} by Balko and Grinerová~\cite{clanek}. 

The techniques that we use also give an estimate for triangle-free ordered graphs with a larger interval chromatic number. The bound is similar to the one in Theorem~\ref{thm-bipartite}, with an additional factor depending on $\chi$ and $q$ in the exponent, but it gives a worse constant in the exponent.

\begin{thm}\label{thm_triangleFree}
For all integers $d \geq 1$ and $q \geq 2$, every ordered $d$-degenerate triangle-free graph $H^<$ on $n$ vertices with interval chromatic number $\chi$ satisfies
\[r_<(H^< ;q) \leq n^{21q^2d {\lceil \log \chi \rceil}^{q-1}}.\]
\end{thm}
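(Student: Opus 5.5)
We prove the triangle-free bound by induction on $q$, using as the main tool an off-diagonal statement in which the graph in color $q$ is replaced by an ordered complete multipartite graph $K^<_m(\chi)$, in the spirit of Theorem~\ref{thm_d_degenerated_bipartite}. Concretely, we aim to show that for triangle-free $d$-degenerate $H^<$ with interval chromatic number $\chi$ and $n$ vertices,
\[
r_<(H^<,\dots,H^<,K^<_m(\chi)) \leq (nm)^{C q d\lceil\log\chi\rceil}\cdot(\text{lower order}),
\]
where there are $q-1$ copies of $H^<$. The diagonal bound then follows by induction. In a coloring with no monochromatic $H^<$, we would like to find a large $K^<_m(\chi)$ in which one color, say $q$, is missing. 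We then apply the $(q-1)$-color bound inside it, with the parts playing the role of the vertex set. Each induction level multiplies the exponent by $O(\lceil\log\chi\rceil)$, which gives the factor $\lceil\log\chi\rceil^{q-1}$. The factor $q^2 d$ comes from the density-increment bookkeeping at each level.

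\textbf{Step 1 (bipartite core, $\chi=2$).} First, we reduce to pairs of ordered sets $A<B$. If the densest color between $A$ and $B$ has density at least $1/q$, we attempt the standard dependent-random-choice/greedy embedding of a $d$-degenerate bipartite ordered graph, as in the proof of Theorem~\ref{thm-bipartite}. Either the embedding succeeds, or we find subsets $A'<B'$ of polynomially smaller size in which that color has density at most $1-\varepsilon$, so another color gains density. Iterating this over the $q$ colors yields the $q^2 d$-type exponent. The key point is that we must track, for each color, whether it is \emph{sparse} between the current pair.

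\textbf{Step 2 (halving the interval chromatic number).} Next, we handle general $\chi$ following Conlon--Fox--Lee--Sudakov. Split the $\chi$ intervals of $H^<$ into a left half and a right half. Since $H^<$ is triangle-free, each vertex neighbourhood is independent. This is what lets us treat the edges between the two halves as a bipartite ordered graph of degeneracy $d$ that can be embedded by Step 1 into a pair $A<B$. Meanwhile, each half is embedded recursively inside $A$ and inside $B$, respectively. Because the halves are embedded in disjoint ordered sets, the cross edges never interact with the internal edges. Triangle-freeness guarantees that the common-neighbourhood constraints from the cross edges do not require edges inside a recursively built part, so the two recursions combine. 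The recursion depth is $\lceil\log\chi\rceil$, and each level costs a polynomial factor $n^{O(qd)}$ in set sizes, giving $n^{O(qd\lceil\log\chi\rceil)}$ per color level.

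\textbf{Step 3 (combining colors).} If the recursive embedding fails for color $i$ at some level, the failure certifies a large pair (and, recursively, a large $K^<_m(\chi)$-like family of ordered sets) on which color $i$ is absent. On this family we restart with $q-1$ colors, with $n$ replaced by $m$. Unwinding the recursion gives an exponent of $21q^2d\lceil\log\chi\rceil^{q-1}$ after the constants are tracked. The main obstacle is Step 2: making the cross embedding and the two recursive embeddings compatible, so that the vertex choices in $A$ still leave enough common neighbours in $B$ for the cross edges. I would handle this by first choosing, via dependent random choice, a subset of $A$ in which every $d$-tuple has many common neighbours in $B$, and only then recursing inside that subset, which loses only a polynomial factor.
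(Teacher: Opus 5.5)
There is a genuine gap. Your proposal never supplies the mechanism that keeps the previously handled colors sparse when the embedding of $H^<$ in the next color fails, and this is the heart of the paper's proof. In Lemma~\ref{lemma_one_step_density_coloring_non_bipartite}, the $\chi$ parts are pairwise sparse in the union of the earlier colors (``blue''). One runs the greedy embedding of $H^<$ in the new color (``red'') along a degeneracy ordering while maintaining the weighted potential $P_t=\sum(1+1/d)^{-(d_{i,t}+d_{j,t})}d_{\mathrm B}(V_{i,t},V_{j,t})\le\beta|E(H)|$. A candidate set may only be replaced by a red neighborhood of relative size at least $c$ whose weighted blue density $\Phi_i$ grows by at most a factor $1+1/d$. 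When no vertex admits such choices, a deletion argument yields $W<R$ of size at least $m$ with red and blue densities both $O(\Delta^3\chi^2n^2c)$, a loss independent of the shrinkage $c^d$. Triangle-freeness is used exactly here: the forward neighborhood $I_t$ is independent, so no edge of $H$ has both candidate sets shrunk in one step, and hence $P_t\le P_{t-1}$.

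Without this mechanism, passing to subsets of relative size about $c^d$ can inflate earlier densities by $c^{-2d}$. The earlier sparsity parameter must then be raised to a power about $2d+1$ at every color, which gives exponents like $(2d+1)^{q-1}$ as in \eqref{eq_1}. Your Step~1 outcome (density at most $1-\varepsilon$, ``another color gains density'') is too weak to feed anything. Your color induction has the same defect. Applying a $(q-1)$-color bound inside a $K^<_m(\chi)$ avoiding color $q$ needs a multipartite-host version of that bound, since edges inside a part are uncontrolled. It also gives $R_q\approx(nR_{q-1})^{O(d\lceil\log\chi\rceil)}$, which compounds $d$. The paper instead merges colors $q-1$ and $q$ and calls the $(q-1)$-color Lemma~\ref{lemma_multipartite} with $s=z$, $c'=c/(64\Delta^3n^2\chi^2)$ and $m'\approx 64\Delta^3n^2\chi^2(c')^{-d}m$. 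Because $N_{q,s,m,c}$ is linear in $m$, each color multiplies the exponent of $1/c$ by $z$ and merely adds $d$.

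Step~2 is also not viable as described. You cannot embed the two halves of $H^<$ recursively inside $A$ and $B$ and the cross edges separately, because every vertex carries both internal and cross constraints. The claim that cross edges never interact with internal edges is false. Your dependent-random-choice patch would have to embed the right half inside common neighborhoods that vary with the already embedded left vertices, not inside one fixed subset. In the paper the $\lceil\log\chi\rceil$ recursion lives on the structure side. The $s$-induction in Lemma~\ref{lemma_multipartite} finds $U_1<U_2$ sparse in colors $[q-1]$, deletes high-degree vertices, and recurses inside each set to get $2^s$ pairwise sparse sets. This is legitimate because both sets lie in the complete host. Meanwhile $H^<$ is only ever embedded in one pass across $\chi$ parts, and $K^<_m(\chi)$ in color $q$ comes from Lemma~\ref{lem_conlon_final}.

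Finally, your stated target $r_<(H^<,\dots,H^<,K^<_m(\chi))\le(nm)^{Cqd\lceil\log\chi\rceil}$ is false for $q\ge3$. Take $m=n$ and an ordered matching $M^<$, which is an ordered subgraph of $K^<_n(\chi)$ with $\chi\le n$; the target would give $r_<(M^<;q)\le n^{O(q\log n)}$, contradicting Li's bound~\eqref{eq-li}. It also conflicts with your own remark that each level multiplies the exponent by $O(\lceil\log\chi\rceil)$. The correct off-diagonal statement, Theorem~\ref{thm_d_degenerated_multipartite}, has exponent $(q+1)^2d\lceil\log\chi\rceil^{q-1}$.
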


Finally, we state our most general upper bound, which covers all ordered graphs. 

\begin{thm}
\label{thm_d_degenerated_multipartite_result}
For all integers $d\geq 2$ and $q\geq2$, let $H^<$ be an ordered  $d$-degenerate graph on $n\geq3$ vertices, with interval chromatic number $\chi$ and the degeneracy ordering $v_1,\dots,v_n$.
Let $p \geq 2$  be an integer such that every edge $v_iv_j$ of $H^<$ with $i<j$ has at most $p-1$ common neighbors among $v_1,\dots,v_{i-1}$. 
Then,
\[
r_<(H^<;q)  \leq n^{60dp^{q-2}\lceil\log\chi\rceil^{q-1}}.
\]
\end{thm}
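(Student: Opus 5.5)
We will prove Theorem~\ref{thm_d_degenerated_multipartite_result} by induction on the number of colors $q$, adapting the density-increment/dependent-random-choice strategy of Conlon, Fox, Lee, and Sudakov~\cite{conlon2017ordered} to the off-diagonal setting. The plan is to prove a stronger off-diagonal statement: for every $q$-coloring of $K_N^<$ with $N = n^{60dp^{q-2}\lceil\log\chi\rceil^{q-1}}$, either some color $i<q$ contains $H^<$, or color $q$ contains an ordered complete multipartite graph $K_m^<(\chi)$ with $m$ large enough. We will then finish using an embedding lemma: if color $q$ restricted to an ordered set has density at least $1-\varepsilon$ between consecutive parts of a fine interval partition, then a greedy embedding along the degeneracy ordering $v_1,\dots,v_n$ places $H^<$. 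At the step where we embed $v_j$, the candidate set must lie in the common neighborhood of its at most $d$ earlier neighbors. The hypothesis that every edge $v_iv_j$ has at most $p-1$ common earlier neighbors is what controls how these neighborhoods overlap, and it explains the factor $p^{q-2}$ in place of $d^{q-2}$.

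\textbf{Step 1 (two colors, base case).} We establish the $q=2$ case in a form handling both $\chi$ and the multipartite target. We split $K_N^<$ into $\chi$ consecutive intervals and recurse by halving $\lceil\log\chi\rceil$ times. At each level, either one color has density at least $1/2$ between two ordered halves, or we pass to a half. Dependent random choice then gives ordered sets $W_1<\dots<W_\chi$ in which every $d$-tuple (in a suitable sense) has many common neighbors in the right color. Each halving costs a factor $n^{O(d)}$ in $N$, which yields $n^{O(d\lceil\log\chi\rceil)}$ and recovers~\eqref{eq_2}. We will phrase this as a lemma: given a $2$-coloring of an ordered set of size $n^{cd\lceil\log\chi\rceil}M$, we find either $H^<$ in color 1 or an ordered $K_M^<(\chi)$ in color 2.

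\textbf{Step 2 (induction on $q$).} We merge colors $q-1$ and $q$ into a single color and apply the $(q-1)$-color statement. Either we find $H^<$ in one of colors $1,\dots,q-2$, or we find a large ordered $K_M^<(\chi)$ in the merged color. Inside that blow-up, each part $W_i$ carries a $2$-coloring. We rerun Step 1 on the structure $W_1<\dots<W_\chi$, where now only the bipartite pairs matter. The key point is that embedding $H^<$ into a multipartite host colored by two colors needs only $\lceil\log\chi\rceil$ further halvings, each costing $n^{O(dp)}$ rather than $n^{O(d)}$. The extra factor $p$ enters because, when a vertex's $d$ earlier neighbors have already been placed, refining the candidate sets for later vertices must respect common-neighbor constraints, and at most $p-1$ of those are shared. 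Iterating multiplies the exponent by $p\lceil\log\chi\rceil$ per new color, giving $d\,p^{q-2}\lceil\log\chi\rceil^{q-1}$ overall, with constant $60$ after bookkeeping; the assumptions $d\ge2$ and $n\ge3$ absorb lower-order terms.

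\textbf{Main obstacle.} The hardest step will be the density-increment lemma inside the multipartite host in Step 2. We must show that within $W_1<\dots<W_\chi$ either the sparse color allows greedy embedding, or we find subsets with a density increment in the other color, losing only a factor $n^{-O(dp)}$ in size. I would first try a dependent-random-choice argument that selects $p$ random vertices rather than $d$. This should exploit the bound of $p-1$ common earlier neighbors per edge, so that only $p$-wise common neighborhoods need to be large. I would then check that the order constraints $W_i<W_j$ are preserved when passing to subsets.
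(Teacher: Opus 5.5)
There is a genuine gap, and it sits exactly where the theorem goes beyond Corollary~\ref{cor-general}: your proposal has no working mechanism that produces the factor $p^{q-2}$.

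\emph{The proposed source of $p$ does not work.} Dependent random choice on $p$ random vertices cannot replace $d$ by $p$. A greedy embedding along $v_1,\dots,v_n$ needs the candidate set of $v_j$ to lie inside the common neighborhood of all of its up to $d$ earlier neighbors. The hypothesis on $p$ does not lower that number; it only bounds how many earlier vertices are common neighbors of both endpoints of one edge.

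\emph{Your architecture leaves nothing for $p$ to act on.} In Step~2 you pass to an exact $K_M^<(\chi)$ in the merged color, so colors $1,\dots,q-2$ vanish from the host. The exponent can then only compound through the dependence of $M$ on $m$. Your two-color lemma with threshold linear in $M$ is false: taking color~1 to be a $C_4$-free polarity graph gives $r_<(K_{2,2}^<,K_{M,M}^<)=\Omega(M^{4/3})$. With greedy embedding the dependence is polynomial, with degree governed by the $\lambda^{-d}$ size loss. Iterating therefore makes $d\lceil\log\chi\rceil$, not $p\lceil\log\chi\rceil$, the compounding factor.

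\emph{Further problems.} Your bookkeeping is inconsistent: if each halving cost only $n^{O(dp)}$, each new color would \emph{add} $O(dp\lceil\log\chi\rceil)$ to the exponent rather than multiply it. Also, you cannot run a halving recursion inside one part of the multipartite host, because edges inside a part may carry colors $1,\dots,q-2$.

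\textbf{The missing idea.} The paper never cleans the merged color into an exact multipartite graph before the very end. Instead it carries a small but nonzero density of the old colors through the embedding.

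Lemma~\ref{lemma_multipartite_general} produces $2^s$ ordered sets with density at most $c$ in colors $1,\dots,q-1$ from $A^{3b_qs}c^{-b_qs}2^{2b_qs^2}m$ vertices. This threshold is linear in $m$, and all doubling in $s$ happens inside the complete $K_N^<$.

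For $s=1$, merging colors $q-1$ and $q$ gives $\chi$ sets in which the old colors (blue) have density at most $(c/A)^p$. Lemma~\ref{lemma_one_step_density_coloring_general} then embeds $H^<$ greedily in color $q-1$. It maintains the potential $P_t$ with weights $(1+1/d)^{-d_{i,t}-d_{j,t}}\lambda^{b_{ij,t}}$, plus coefficients $\kappa_{ij}$ for edges with both ends in $I_t$. This controls the blue density between candidate sets of adjacent unembedded vertices.

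This is where $p$ enters. When $v_t$ is a common neighbor of $v_i$ and $v_j$, both candidate sets shrink by a factor $\lambda$, and the blue density may grow by $\lambda^{-1}$. This happens at most $p-1$ times, which forces $\beta\approx(c/A)^p$.

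Feeding $c'=(c/A)^p$ and $m'\approx(A/c)^dm$ into the $(q-1)$-color statement gives $b_q=p\lceil\log\chi\rceil\, b_{q-1}+d$. So $d$ enters only additively. Applying Lemma~\ref{lem_conlon_final} in color $q$ then yields $n^{60dp^{q-2}\lceil\log\chi\rceil^{q-1}}$.
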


Since for $d$-degenerate graphs the parameter $p$ can always be chosen to be at most $d$, we immediately obtain the following estimate.

\begin{cor}
\label{cor-general}
\label{cor_d_degenerated_multipartite_result}
For all integers $d\geq 2$ and $q\geq2$, every ordered $d$-degenerate graph $H^<$ on $n\geq3$ vertices, with interval chromatic number $\chi$ satisfies
\[
 r_<(H^<;q)\leq n^{60d^{q-1}\lceil\log\chi\rceil^{q-1}}.
\]
\end{cor}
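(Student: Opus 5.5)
This follows from Theorem~\ref{thm_d_degenerated_multipartite_result} applied with a suitable choice of $p$; the only work is choosing $p$ and checking the exponent.

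Fix a degeneracy ordering $v_1,\dots,v_n$ of $H^<$. By definition, each vertex has at most $d$ neighbors among the earlier vertices in this ordering.

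Take any edge $v_iv_j$ with $i<j$. Every common neighbor of $v_i$ and $v_j$ lying among $v_1,\dots,v_{i-1}$ is an earlier neighbor of $v_j$. It is not $v_i$ itself, which is an earlier neighbor of $v_j$ but lies outside $\{v_1,\dots,v_{i-1}\}$. Hence there are at most $d-1$ such common neighbors.

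Therefore $p=d$ satisfies the hypothesis of Theorem~\ref{thm_d_degenerated_multipartite_result}. The requirement $p\geq 2$ holds because $d\geq2$, and the conditions $q\geq2$ and $n\geq3$ are inherited from the corollary.

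Substituting $p=d$ gives
\[
r_<(H^<;q)\leq n^{60d\cdot d^{q-2}\lceil\log\chi\rceil^{q-1}}=n^{60d^{q-1}\lceil\log\chi\rceil^{q-1}},
\]
which is the claimed bound.

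The only point needing care is the count of common neighbors: excluding $v_i$ is what brings the bound down from $d$ to $d-1\leq p-1$.
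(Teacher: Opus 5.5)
Your proposal is correct and matches the paper's argument. The paper likewise notes that $p=d$ always satisfies the hypothesis of Theorem~\ref{thm_d_degenerated_multipartite_result}, because $v_j$ has at most $d-1$ earlier neighbors other than $v_i$, and then substitutes $p=d$ so that $60d\,p^{q-2}=60d^{q-1}$.
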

Compared to the preceding results, there is a worse dependence on $d$ for $q\geq3$, but the resulting bound is still polynomial for fixed $d$, $q$, and $\chi$ and quasi-polynomial for fixed $d$ and $q$.
Theorem~\ref{thm_d_degenerated_multipartite_result} extends the earlier result~\eqref{eq_2} by Conlon, Fox, Lee, and Sudakov to the multicolor setting, up to an absolute constant in the exponent. 

It follows from the result~\eqref{eq-li} by Li~\cite{li26} that our upper bound is almost tight for ordered matchings. For $\Delta=d=1$ and any fixed $q$, using $\chi\leq n$, the upper bound in Theorem~\ref{thm_triangleFree} matches~\eqref{eq-li} up to a factor of $\Theta((\log\log n)^{q-1})$ in the exponent.

The proofs of all the main results are based on the same, rather complicated, embedding method. The more general the upper bound is, the more difficult some steps become, and some arguments require weaker intermediate estimates. To make the arguments easier to follow, we present the three proofs consecutively, starting with the simplest.

\section{Open Problems}

We believe that the term $d^{q-1}$ in the exponent of the estimate from Corollary~\ref{cor-general} can be improved, similarly to the estimates for triangle-free ordered graphs.
A substantial gap remains between the upper and lower bounds for fixed $d$ and $\chi$.
Apart from~\eqref{eq-lowerBound}, we are not aware of any nontrivial lower bounds in the multicolor case.

\begin{problem}
For fixed $d,\chi\geq2$, improve the lower bounds for multicolor ordered Ramsey numbers of $d$-degenerate graphs with interval chromatic number $\chi$.
\end{problem}

For $d\geq2$, the bound in Corollary~\ref{cor-general} beats the classical estimate $q^{qn}$ if $n^{60d^{q-1}\lceil\log\chi\rceil^{q-1}}$ is asymptotically smaller than $q^{qn}$. Thus, it would be interesting to improve the available bounds in the remaining parameter ranges. For $q=2$, Conlon, Fox, Lee, and Sudakov~\cite[Theorem~1.5]{conlon2017ordered} already proved $r_<(H^<)\leq2^{Cd\log^2(2n/d)}$ for an absolute constant $C>0$, which is particularly relevant when $d$ is large.

\begin{problem}
For $q \geq 3$, improve the upper bounds on $r_<(H^< ;q)$ for $n$-vertex ordered $d$-degenerate graphs $H^<$ with $d\geq2$ and interval chromatic number $\chi$ that satisfy
\[
n^{60d^{q-1}\lceil\log\chi\rceil^{q-1}}\geq q^{qn}.
\]
\end{problem}

\paragraph{Declaration on AI use.}

The original arguments were developed without AI. AI tools were subsequently used to polish the writing, to review the proofs, and to assist in improving the dependence on $d$.

\section{Proof of Theorem~\ref{thm-bipartite}}

Here, we prove Theorem~\ref{thm-bipartite}. In fact, we prove a stronger statement as we give an estimate on the off-diagonal multicolor ordered Ramsey numbers where one of the ordered graphs is replaced by an ordered complete multipartite graph; see Theorem~\ref{thm_d_degenerated_bipartite}.

To obtain our estimate, we use the following two results by Conlon, Fox, Lee, and Sudakov~\cite{conlon2017ordered}. Both results are stated in terms of edges and nonedges, which is equivalent to the two-color setting. The first lemma shows that if an ordered graph avoids an ordered subgraph with a fixed degeneracy, then its vertex set contains many large sets $W_1<\cdots<W_{2^s}$ with only a few edges between them.

\begin{lemma}[\cite{conlon2017ordered}] \label{lemma_conlon_sets}
Let $H^<$ be an ordered $d$-degenerate graph on $n$ vertices with maximum degree~$\Delta$. Assume that we have $c \in (0,1)$ and a positive integer $s$ such that $N \geq \big(2\Delta n (2^{s} c^{-1})^{d}\big)^{s}$. If an ordered graph on vertex set $[N]$ does not contain an ordered copy of $H^<$, then there exist sets $W_{1},W_{2},\dots,W_{2^{s}}\subset[N]$ such that
\begin{itemize}
\item[(i)]
for all $i$, $|W_i| \ge\frac{c^{sd} N}{(2^{sd + 1} \Delta n)^{s}}$,
\item[(ii)]
for $i<j$, all vertices in $W_{i}$ precede all vertices in $W_{j}$,
\item[(iii)]
for $i<j$, the density of edges between $W_{i}$ and $W_{j}$ is at most $c$.
\end{itemize}
\end{lemma}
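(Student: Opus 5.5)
This lemma is cited from Conlon, Fox, Lee, and Sudakov, so here is how I would reprove it directly. The plan is to induct on $s$ with a one-step splitting lemma. The key ingredient is a density-type embedding statement: if an ordered graph on an ordered vertex set $A<B$, with $|A|,|B|\ge m$, has edge density at least $c$ between \emph{every} pair of large subsets $A'\subseteq A$, $B'\subseteq B$ with $A'<B'$, then we can embed $H^<$. We avoid this by showing that non-containment of $H^<$ forces a sparse pair.

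For the one-step lemma, take an ordered set $W$ of size $M$ and split it into $2^s$ (or, recursively, two) consecutive intervals. The claim I would prove is the following. If the graph on $W$ contains no $H^<$, then there exist subsets $W'<W''$ of $W$, each of size at least $c^{d}M/(2^{sd+1}\Delta n)$, with density at most $c$ between them.

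To prove the claim, cut $W$ into $n$ consecutive intervals $I_1<\dots<I_n$ of size $M/n$. Embed the vertices of $H^<$ in the degeneracy order $v_1,\dots,v_n$, placing vertex $u$ into the interval $I_{\pi(u)}$ determined by its position in the vertex order of $H^<$, so ordering is automatic. Maintain for each unembedded vertex $u$ a candidate set $C_u\subseteq I_{\pi(u)}$. When $v_i$ is embedded at $x$, each later neighbour $u$ of $v_i$ has its candidate set replaced by $C_u\cap N(x)$.

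We need a choice of $x$ that keeps each $C_u$ of size at least a $c$-fraction of its old size and avoids previously used vertices. Such an $x$ exists as long as there is no sparse pair. Indeed, if more than a $1/(2\Delta)$-fraction of $C_{v_i}$ were bad for some later neighbour $u$, then those bad vertices together with $C_u$ would form a pair with density less than $c$. Each $u$ loses at most a factor $c$ at most $d$ times, since it has at most $d$ earlier neighbours. Hence $|C_u|\ge c^d M/n$ throughout, minus the $n$ used vertices, which gives the size bound with the $2\Delta n$ and $2^{sd}$ factors absorbing the losses. The factor $2^{sd}$ arises because we need density at most $c$ in the recursion at scale $2^{-s}$. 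In practice I would run the greedy argument with parameter $c2^{-s}$ inside appropriately chosen pieces, which explains the $(2^sc^{-1})^d$ in the hypothesis on $N$.

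For the iteration, apply the step $s$ times in a binary-tree fashion. At level $t$ we have $2^{t}$ ordered sets, each of size at least $(c^{d}/(2^{sd+1}\Delta n))^{t}N$. Each set is itself $H^<$-free, so we split it into two sparse halves. After $s$ levels this yields the $2^s$ leaves $W_1<\dots<W_{2^s}$ with the size bound (i), and the order (ii) is inherited.

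The main obstacle is (iii). Sparseness is only guaranteed between sibling pairs, not between all pairs $i<j$, and passing to subsets can raise density by the shrinkage factor. I would fix this by strengthening the step: at each split, apply the greedy argument to the graph restricted to the current set but require the pair to be sparse relative to all $2^s$ final descendants. Concretely, the lower bound on descendant sizes relative to their ancestors is a factor $(c^d/(2^{sd+1}\Delta n))^{s-t}$. To make this work, at each node I would instead pick the two children inside a sparse pair of density at most $c\cdot(\text{shrinkage})$, which is exactly what the stronger density parameter $c2^{-s}$ and the hypothesis $N\ge(2\Delta n(2^sc^{-1})^d)^s$ are designed to pay for. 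Each pair $i<j$ of leaves then descends from a unique split whose sparsity transfers to the leaf pair with density at most $c$. Verifying this averaging transfer is the step I expect to require the most care.
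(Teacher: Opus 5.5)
Your greedy one-step argument is sound. It is the standard Conlon--Fox--Lee--Sudakov step; the paper only quotes this lemma, but the same step appears as the base case $q=2$, $s=1$ of Lemma~\ref{lemma_multipartite_general}. The proof breaks down at (iii).

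\textbf{Why the averaging transfer fails.} A density bound $\varepsilon$ between $U_1$ and $U_2$ only gives density at most $\varepsilon|U_1||U_2|/(|X||Y|)$ between subsets $X\subseteq U_1$ and $Y\subseteq U_2$. To pass the bound down to leaves, you would need $\varepsilon$ below $c$ times the product of both relative leaf sizes, roughly $c\,(c^d/(2^{sd+1}\Delta n))^{2(s-1)}$. The parameter $c2^{-s}$ is nowhere near this, so your claim that it ``pays for the shrinkage'' is false. Lowering $\varepsilon$ that far is also circular. Each split loses about $\varepsilon^d/(2\Delta n)$ in size, so the exponent in the required $N$ grows like $(2d+1)^s$ rather than yielding $N\geq(2\Delta n(2^sc^{-1})^d)^s$. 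For the same reason, a level-by-level binary tree with only sibling sparsity cannot give (iii) with the stated bounds.

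\textbf{The missing idea.} When the embedding fails, you get more than a sparse pair. Every vertex of $W$ has fewer than $c'|R|$ neighbours in $R=C_u$, where $c'=c2^{-s}$, and this one-sided degree bound is inherited by every subset of $W$. Proceed sequentially rather than level by level:
\begin{enumerate}
\item Recurse inside $W$ with $s-1$ to obtain $X_1<\cdots<X_{2^{s-1}}$.
\item Each $X_j$ has density less than $c2^{-s}$ to $R$. So fewer than $2^{-s}|R|$ vertices of $R$ have more than $c|X_j|$ neighbours in $X_j$. Deleting these for all $j$ removes less than $|R|/2$.
\item Recurse inside the remaining set $R'$.
\end{enumerate}
Every vertex of $R'$ has at most $c|X_j|$ neighbours in $X_j$. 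Hence the density between $X_j$ and any subset of $R'$ is at most $c$. Together with the inductive conclusions inside $W$ and $R'$, this gives (iii) for all pairs.

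\textbf{Where the parameters come from.} The factor $2^{-s}$ pays for the union over $2^{s-1}$ sets, and the factor $2$ in $2\Delta n$ pays for halving $R$. Since $|R|\geq(c')^{d-1}M/n$, both $|W|$ and $|R'|$ are at least $(c2^{-s})^dM/(2\Delta n)=c^dM/(2^{sd+1}\Delta n)$. The hypothesis on $N$ is exactly what the recursion with $s-1$ then needs. The paper uses the same cleaning in the $s\geq2$ steps of Lemmas~\ref{lemma_multipartite} and~\ref{lemma_multipartite_general}. There the degree condition is first created by deleting high-degree vertices from $U_1$.
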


The second lemma states that once we have ordered vertex sets with sufficiently large edge density between them, we can embed a large ordered complete multipartite graph.

\begin{lemma}[\cite{conlon2017ordered}] \label{lem_conlon_final}
Let $K^<=K_{n}^<(\chi)$ be a trivially ordered complete $\chi$-partite graph. If an ordered graph $G^<$ on the vertex set $[N]$ is such that there exist sets $W_{1},W_{2},\dots,W_{\chi}$ satisfying the following three conditions:
\begin{itemize}
\item[(i)]
for all $i$, $|W_{i}|\ge4\chi n$,
\item[(ii)]
for $i< j$, the vertices in $W_{i}$ precede the vertices in $W_{j}$,
\item[(iii)]
for $i<j$, the density of non-edges between $W_{i}$ and $W_{j}$ is at most $\frac{1}{8\chi^2 n}$,
\end{itemize}
then $G^<$ contains a copy of $K^<$.
\end{lemma}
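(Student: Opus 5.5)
The plan is to find the copy of $K^<$ directly inside $W_1<\cdots<W_\chi$. I would take a random subset of size $2n$ in each $W_i$, and then delete one endpoint of every non-edge that remains between different parts. Because the sets $W_i$ are already ordered as $W_1<\cdots<W_\chi$, any choice of $n$ vertices from each $W_i$ with all cross pairs being edges gives an ordered copy of $K^<=K_n^<(\chi)$. Edges inside a part are irrelevant. So it suffices to find $U_i\subseteq W_i$ with $|U_i|\ge n$ and no non-edges between $U_i$ and $U_j$ for $i\neq j$.

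\textbf{Step 1 (sampling).} By (i), $|W_i|\ge 4\chi n\ge 2n$. For each $i$, I choose independently a uniformly random subset $S_i\subseteq W_i$ with $|S_i|=2n$. Fix $i<j$ and a non-edge $uv$ with $u\in W_i$, $v\in W_j$. It survives in $S_i\times S_j$ with probability $\frac{2n}{|W_i|}\cdot\frac{2n}{|W_j|}$. By (iii) there are at most $\frac{|W_i||W_j|}{8\chi^2 n}$ such non-edges. Hence the expected number of non-edges between $S_i$ and $S_j$ is at most $\frac{4n^2}{8\chi^2 n}=\frac{n}{2\chi^2}$. Summing over the $\binom{\chi}{2}<\chi^2/2$ pairs, the expected total number of cross non-edges is less than $n/4$. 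Fix a choice of the sets $S_1,\dots,S_\chi$ for which the total is below $n/4$.

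\textbf{Step 2 (cleaning).} For every remaining cross non-edge, I delete one of its endpoints, say the one in the earlier part. At most $n/4$ vertices are deleted in total, so each part keeps at least $2n-n/4>n$ vertices. Let $U_i$ be the surviving part of $S_i$. Then there are no non-edges between different $U_i$'s. Choosing any $n$ vertices in each $U_i$ and using (ii), the resulting vertex sets appear in the order $U_1<\cdots<U_\chi$ and span an ordered copy of $K_n^<(\chi)$ in $G^<$.

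\textbf{Main obstacle.} The only delicate point is the counting in Step 1. Sampling just $n$ vertices per part would give an expected $n/16$ non-edges, which is not below $1$, so we cannot hope for zero non-edges outright. The oversampling to $2n$ followed by deletion is what makes the density bound $\frac{1}{8\chi^2 n}$ sufficient. The stronger size assumption $4\chi n$ leaves slack, and it could alternatively be used for a deterministic version: first discard vertices of $W_i$ with large non-degree to some $W_j$, then choose greedily.
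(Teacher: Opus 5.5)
Your argument is correct and complete. The first-moment count gives an expected $\binom{\chi}{2}\cdot\frac{n}{2\chi^2}<\frac n4$ cross non-edges among the samples. Deleting one endpoint of each leaves more than $\frac{7n}{4}\ge n$ vertices in every part. By (ii), any $n$ survivors per part then form an ordered copy of $K_n^<(\chi)$.

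The paper itself does not prove this lemma; it quotes it from Conlon, Fox, Lee, and Sudakov. So the natural comparison is with the deterministic route you mention at the end, which is the standard proof of such statements. I have not checked that it is the one in the source.

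\emph{Deterministic route.} Remove from each $W_i$ the vertices with more than $|W_j|/(4\chi n)$ non-neighbours in some $W_j$. By (iii) this costs at most $|W_i|/(2\chi)$ vertices per $j$, hence fewer than half of $W_i$. Then choose the $\chi n$ vertices greedily. All picks from other parts together rule out fewer than $|W_j|/4$ vertices of each $W_j$. So at every step more than $|W_j|/4-n+1\ge 1$ candidates remain.

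\emph{Comparison.} The deterministic version is constructive and works from a minimum-degree condition. Your first-moment-plus-deletion argument is shorter and uses hypothesis (i) only through $|W_i|\ge 2n$. So it shows that the factor $\chi$ in the size condition is superfluous; the degree-based version needs only $|W_i|\ge 4n$. In the paper the lemma is used only to show that a monochromatic copy exists, so either argument suffices.
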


The next result is a simple observation that allows us to control the density of edges between ordered sets. We turn a density condition between two large sets into a density condition between two families of smaller ordered subsets. Starting from two large vertex sets with few blue edges between them, we partition suitable subsets into ordered blocks and retain blocks with controlled densities.

\begin{lemma}\label{lemma_divide_into_intervals_sparse}
Let $n,m$ be positive integers, $c\geq0$, and $W_1<W_2$ be two vertex sets, each containing at least $N=2nm$ vertices, that induce a complete bipartite ordered graph colored blue and red. Assume that the density of blue edges between $W_1$ and $W_2$ is at most $c$. Then there exist sets
\[
U_1<\cdots<U_n\subseteq W_1,\qquad U_{n+1}<\cdots<U_{2n}\subseteq W_2,
\]
each of size $m$, such that the density of blue edges between $U_i$ and $U_j$ is at most $2cn$ whenever $1 \leq i\leq n<j \leq 2n$.
\end{lemma}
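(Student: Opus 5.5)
We will argue by an averaging and deletion procedure, first on $W_1$ and then on $W_2$. Assume without loss of generality that $|W_1|=|W_2|=N=2nm$; if they are larger, restrict to the first $N$ vertices of $W_1$ and the last $N$ vertices of $W_2$. This may increase the blue density, so instead we average over a random choice of $N$-subsets, which preserves the expected blue density. Hence we can choose $W_1'\subseteq W_1$ and $W_2'\subseteq W_2$ of size exactly $N$ with blue density at most $c$. Listing the vertices of each set in increasing order, we split $W_1'$ into $2n$ consecutive blocks $A_1<\dots<A_{2n}$ and $W_2'$ into $2n$ consecutive blocks $B_1<\dots<B_{2n}$, each of size $m$. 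Since consecutive blocks are ordered sets, any chosen subfamily automatically satisfies $U_1<\dots<U_n<U_{n+1}<\dots<U_{2n}$.

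We now define $d(A_i,B_j)$ as the blue density between $A_i$ and $B_j$. The blue density between $W_1'$ and $W_2'$ equals the average of $d(A_i,B_j)$ over all $4n^2$ pairs, so $\sum_{i,j} d(A_i,B_j)\le 4n^2c$.

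The next step is to select blocks. Let $D_i=\sum_j d(A_i,B_j)$. Then $\sum_i D_i\le 4n^2c$, so by Markov's inequality at most $n$ indices satisfy $D_i>4nc$. Keeping $n$ blocks $A_i$ with $D_i\le 4nc$, we get $\sum_{i\text{ kept}} D_i\le 4n^2c$. Now let $E_j=\sum_{i\text{ kept}} d(A_i,B_j)$. Then $\sum_j E_j\le 4n^2c$ over $2n$ indices $j$, so at most $n$ of them have $E_j>4nc$. We keep $n$ blocks $B_j$ with $E_j\le 4nc$.

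The main obstacle is that this only bounds the sums $E_j$, not the individual pairwise densities. Since each single $d(A_i,B_j)\le E_j$, this gives pairwise density at most $4nc$, which is off by a factor of two from the required $2cn$.

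To obtain the sharp constant, we use a more economical per-pair selection. Recall that we are allowed to discard half of the blocks on each side. We consider the bipartite "bad" graph on the blocks, with an edge $(i,j)$ whenever $d(A_i,B_j)>2cn$. Its number of edges is less than $4n^2c/(2cn)=2n$. Deleting one endpoint of each bad edge removes fewer than $2n$ blocks in total. If we delete from the side with spare capacity, splitting the deletions so that each side loses at most $n$ blocks, then we are left with $n$ blocks on each side and no bad pair between them.

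The splitting works as follows. Each bad edge uses one $A$-block and one $B$-block. We can assign each bad edge to delete its $A$-endpoint until $n$ $A$-blocks have been removed, and delete $B$-endpoints for the remaining edges. Because there are fewer than $2n$ bad edges, fewer than $n$ edges remain after the first $n$ deletions, so fewer than $n$ $B$-blocks are removed. If fewer than $n$ blocks are deleted on some side, we trim that side arbitrarily down to $n$ blocks.

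The surviving blocks are the required $U_1,\dots,U_{2n}$, and every pair $U_i,U_j$ with $i\le n<j$ has blue density at most $2cn$. The only delicate point is the initial reduction to exactly $N$ vertices. If averaging over subsets feels awkward, we can instead partition $W_1$ into $2n$ consecutive blocks whose sizes are at least $m$ and nearly equal, and then use weighted averaging. The averaging approach is the cleanest option.
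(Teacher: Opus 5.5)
Your final argument is correct and is essentially the paper's proof: choose random $N$-subsets keeping blue density at most $c$, cut each into $2n$ blocks of size $m$, and observe there are fewer than $2n$ bad pairs. (Only the case $c=0$ needs a separate remark, where your division by $2cn$ is not allowed but there are trivially no bad pairs.) Then cover the bad pairs with at most $n$ blocks per side by deleting first-part endpoints until $n$ are used and then second-part endpoints; the Markov detour giving $4nc$ is unnecessary and can simply be dropped.
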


\begin{proof}
First, we choose subsets of size $N$ in the two parts with blue density at most $c$. Such a choice exists by averaging over independent uniformly chosen subsets of size $N$. We partition each chosen subset, in its inherited order, into $2n$ blocks of size $m$. We call a pair of blocks from different parts \emph{bad} if its blue density exceeds $2cn$. If $c=0$, there are no bad pairs. Otherwise, there are fewer than $2n$ bad pairs, since $2n$ such pairs would contain more than $2n\cdot 2cn\cdot m^2=cN^2$ blue edges.

Consider the bipartite graph whose edges are the bad pairs. Its edges can be covered by at most $n$ block-vertices in each part, as if there are at most $n$ nonisolated vertices in the first part, take all of them; otherwise, take any $n$ nonisolated vertices there. In the latter case, at least $n$ edges are covered, so at most $n$ edges remain, and their endpoints in the second part cover them. Remove the blocks in this cover and retain $n$ of the remaining blocks from each part, in order. These blocks have the required properties.
\end{proof}

Now we prove the main technical lemma, which extends the idea from the proof of Lemma~\ref{lemma_conlon_sets} to a setting with three colors. Roughly speaking, this lemma states that if an ordered graph colored red-blue-green with a low density of blue edges does not contain a red copy of the given ordered graph $H^<$, then there are two large sets with few non-green edges between them. The proof follows a greedy embedding of the vertices of $H^<$ in red according to a degeneracy ordering of $H^<$. At each step, we maintain candidate sets for the remaining vertices and either successfully embed the next vertex or obtain a pair of large sets with controlled density of blue and red edges.

\begin{lemma}\label{lemma_one_step_density_coloring}
For positive integers $n,m,d,\Delta$ with $d\leq\Delta$, and real numbers $c \in (0,1)$ and $\beta\geq c$, let $H^<$ be an ordered $d$-degenerate graph on $n$ vertices with interval chromatic number two and maximum degree $\Delta$. Let an ordered complete bipartite graph, with its first part preceding its second, have its two parts partitioned into ordered sets $U_1<\cdots<U_n$ and $U_{n+1}<\cdots<U_{2n}$, each of size at least $L\geq2m\Delta/c^d$. Color its edges red, blue, and green. If the blue density between each pair $U_i,U_j$ with $i\leq n<j$ is at most $\beta$, and there is no red ordered copy of $H^<$, then there are sets $W_1<W_2$ in different parts, each of size at least $m$, whose non-green density is at most $32n\Delta^3\beta$.
\end{lemma}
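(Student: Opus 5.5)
We run a greedy red embedding of $H^<$, following the Conlon--Fox--Lee--Sudakov proof of Lemma~\ref{lemma_conlon_sets}, but using the given block structure in place of the dyadic splitting. Since $H^<$ has interval chromatic number two, its vertices split into a left interval $A$ and a right interval $B$, with all edges between them. The $n$ blocks $U_1<\dots<U_n$ give enough room to keep the vertices of $A$ in order: the $k$-th vertex of $A$ (in the vertex order of $H^<$) is reserved to block $U_k$. Similarly, the $k$-th vertex of $B$ is reserved to $U_{n+k}$. Then any choice of one vertex per reserved block is automatically increasing, so only the red adjacencies need to be enforced.

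We embed $v_1,\dots,v_n$ in the degeneracy ordering. For each not-yet-embedded vertex $u$ we maintain a candidate set $C(u)\subseteq U_{\mathrm{block}(u)}$, the set of vertices of that block that are red-adjacent to the images of all already-embedded neighbours of $u$. The invariant is $|C(u)|\geq c^{e(u)}L$, where $e(u)\le d$ is the number of embedded earlier neighbours of $u$. When embedding $v_t$, we look for $x\in C(v_t)$ such that, for every later neighbour $w$ of $v_t$ (at most $\Delta$ of them, all in the opposite part), $x$ has at least $c|C(w)|$ red neighbours in $C(w)$. If such an $x$ exists, we embed $v_t\mapsto x$ and shrink each $C(w)$ to the red neighbourhood of $x$ in it; the invariant is preserved. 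If every step succeeds, we obtain a red copy of $H^<$, a contradiction.

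So some step fails. Then every $x\in C(v_t)$ has red degree less than $c|C(w)|$ into $C(w)$ for some later neighbour $w$. By pigeonhole, some single $w$ serves as the witness for at least $|C(v_t)|/\Delta$ of the vertices $x$. Let $W'_1$ be this set of vertices $x$ and let $W'_2=C(w)$. These lie in blocks from different parts, they have size at least $c^dL/\Delta\ge 2m$, and the red density between them is less than $c\le\beta$. The blue density between them is at most $\beta|U_i||U_j|/(|W'_1||W'_2|)$, since they sit inside blocks with blue density at most $\beta$. This ratio can be as large as about $\Delta/c^{2d}$, which is far worse than the $O(n\Delta^3)$ blow-up the statement allows.

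This loss is the main obstacle, and the fix has to come from the choice of the candidate sets. Instead of shrinking $C(w)$ to an arbitrary red neighbourhood, we shrink only along vertices that also keep the blue density under control. When embedding $v_t$ we additionally require that, for each later neighbour $w$, $x$ has blue degree into $C(w)$ at most $8n\Delta^2\beta|C(w)|$ (or a similar threshold), and that the shrunk $C(w)$ still has blue density to the surviving candidates of the opposite block at most a constant factor above $\beta$. A Markov-type averaging shows that at most a $1/(4\Delta)$ fraction of $C(v_t)$ violates each blue condition. The vertices violating these conditions can therefore be discarded while keeping at least half of $C(v_t)$. The extra factors $n$ and $\Delta^3$ in $32n\Delta^3\beta$ come from union bounds over the at most $\Delta$ later neighbours and over the $n$ steps.

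With this in place, at a failing step the sets $W'_1,W'_2$ have red density below $c\le\beta$ and blue density at most about $16n\Delta^3\beta$, so their non-green density is at most $32n\Delta^3\beta$. Finally, we apply the averaging from the proof of Lemma~\ref{lemma_divide_into_intervals_sparse}: choose random subsets of size exactly $m$, keeping the density bound in expectation. This gives $W_1<W_2$ of size at least $m$, as required. The bookkeeping that makes the blue-density invariant survive $d$ rounds of shrinking, each by a factor of $c$, is where I expect most of the work.
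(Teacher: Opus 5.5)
\textbf{There is a genuine gap.} You diagnose the obstacle correctly: the naive greedy embedding loses a factor of about $\Delta c^{-2d}$ in blue density. The proposed fix fails, however, at exactly the step you defer to ``bookkeeping''.

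Your second blue condition asks that the shrunk set $N_R(x)\cap C(w)$ keep blue density to the candidate sets $C(u)$ of the other neighbours $u$ of $w$ within a constant factor of $\beta$. A Markov-type averaging over $x$ cannot secure this, because the red neighbourhoods are fixed by the colouring, not random. Take $\beta\le 1$ small, and suppose:
\begin{itemize}
\item every $x\in C(v_t)$ has the same red neighbourhood $Y\subseteq C(w)$, with $|Y|=\sqrt{\beta}\,|C(w)|$;
\item $Y$ has blue density $\sqrt{\beta}$ to $C(u)$, and $C(w)\setminus Y$ has no blue edges to $C(u)$.
\end{itemize}
Then the pair $C(w),C(u)$ still has blue density exactly $\beta$. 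Every $x$ passes your red test (since $c\le\beta\le\sqrt{\beta}$) and fails your blue test, so you cannot discard only a $1/(4\Delta)$ fraction of $C(v_t)$. Your failure case also has nothing to output: $W'_1$ and $C(w)$ have red density $\sqrt{\beta}$, which is far above $32n\Delta^3\beta$.

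Two further problems. Even an enforceable constant-factor loss per shrink would compound to $K^{2d}$ over the up to $d$ shrinks at each end of an edge, so the slack per shrink must be $1+O(1/d)$. And the factors $n\Delta^3$ do not come from union bounds over steps.

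\textbf{The missing idea is the paper's deletion argument, run on a global weighted potential.}
\begin{itemize}
\item The paper maintains a single potential $\sum (1+1/d)^{-(d_{i,t}+d_{j,t})}\,d_{\mathrm B}(V_{i,t},V_{j,t})\le\beta|E(H)|$, summed over edges $v_iv_j$ between unembedded vertices.
\item When embedding $v_t$ at $x$, it shrinks $C(w)$ not to the whole red neighbourhood of $x$ but to any \emph{subset} $S$ of it with $|S|\ge c|C(w)|$ and $\Phi_w(S)\le(1+1/d)\Phi_w(C(w))$. Here $\Phi_w$ is the weighted sum of blue densities from $S$ to the candidate sets of the other neighbours of $w$.
\item The later neighbours of $v_t$ form an independent set, so each edge is shrunk at only one end, and the potential does not increase.
\item If no $x$ admits such subsets for all its later neighbours, pigeonhole gives a set $W$ with $|W|\ge|C(v_t)|/\Delta$ and a single neighbour $w$ failing for all $x\in W$.
\item Starting from $R=C(w)$, one repeatedly deletes the red neighbourhood in $R$ of any $x\in W$ that has at least $c|C(w)|$ red neighbours in $R$.
\item Each deleted set $S$ satisfies $\Phi_w(S)>(1+1/d)\Phi_w(C(w))$. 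Since $|S|\Phi_w(S)$ is additive over disjoint sets, at most a $d/(d+1)$ fraction of $C(w)$ is deleted.
\item Hence $|R|\ge|C(w)|/(d+1)$, and the red density between $W$ and $R$ is at most $(d+1)c$.
\item The blue density between $W$ and $R$ is at most $\Delta(d+1)(1+1/d)^{2d-1}$ times the bound $\beta|E(H)|$ on the single potential term of $v_tw$. With $|E(H)|\le n\Delta/2$ and $d\le\Delta$, this is where $n\Delta^3$ comes from.
\end{itemize}
In the example above, this procedure deletes $Y$ and returns $W$ together with $C(w)\setminus Y$. That is precisely the output your argument cannot find.
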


\begin{proof}
Let $v_1,\dots,v_n$ be a degeneracy ordering of $H$. If $v_i$ is the $j$th vertex in the vertex ordering of $H^<$ within the first interval of $H^<$, then we let $V_i=U_j$; if it is the $j$th vertex in the second interval, then we let $V_i=U_{n+j}$. This assignment ensures that choosing one vertex from each $V_i$ preserves the vertex order of $H^<$.

We use $d_{\mathrm B}(X,Y)$ to denote the density of blue edges between $X$ and $Y$. We write $N_R(w)$ for the red neighborhood of $w$. At step $t$, we maintain vertices $w_i\in V_i$ for each $i\leq t$ and candidate sets $V_{i,t}\subseteq V_i$ for every $i>t$ satisfying the following four conditions:
\begin{itemize}
\item[(i)] If $i,j\leq t$ and $v_iv_j\in E(H)$, then $w_iw_j$ is red.
\item[(ii)] If $j\leq t<i$ and $v_iv_j\in E(H)$, then every edge from $w_j$ to $V_{i,t}$ is red.
\item[(iii)] For $i>t$, $|V_{i,t}|\geq c^{d_{i,t}}|V_i|$, where $d_{i,t}$ is the number of neighbors of $v_i$ among $v_1,\dots,v_t$.
\item[(iv)] Finally,
\[P_t = \sum_{\substack{t<i<j\\v_iv_j\in E(H)}}
(1+1/d)^{-(d_{i,t}+d_{j,t})}d_{\mathrm B}(V_{i,t},V_{j,t})\leq \beta|E(H)|.
\]
\end{itemize}

Initially, we set $V_{i,0}=V_i$ and $d_{i,0}=0$, so that $P_0\leq \beta|E(H)|$, and all the four conditions are satisfied. 
Note that if the process reaches step $n$, then we have a red copy of $H^<$.

For an integer $t \geq 1$, assume that step $t-1$ has been completed. We let $I_t=\{i>t:v_tv_i\in E(H)\}$ and observe that $|I_t|\leq\Delta$, as $H^<$ has maximum degree $\Delta$. Since $H$ is bipartite, the set $I_t$ is independent. Thus, if $i\in I_t$ and $j>t$ is adjacent to $i$, then $j\notin I_t$ and its candidate set remains $V_{j,t-1}$. For $i\in I_t$ and each nonempty $S\subseteq V_{i,t-1}$,  we define
\[
\Phi_i(S)=\sum_{\substack{j>t\\v_iv_j\in E(H)}}
(1+1/d)^{-(d_{i,t-1}+d_{j,t-1})}d_{\mathrm B}(S,V_{j,t-1}).
\]
All weights $(1+1/d)^{-(d_{i,t-1}+d_{j,t-1})}$ and comparison sets $V_{j,t-1}$ in this definition remain fixed during the step.

For $w\in V_{t,t-1}$ and $i\in I_t$, call a set $S_i$ \emph{$w$-suitable} if it lies in the red neighborhood of~$w$ in $V_{i,t-1}$, has size at least $c|V_{i,t-1}|$, and satisfies $\Phi_i(S_i)\le
(1+1/d)\Phi_i(V_{i,t-1})$.
If some $w\in V_{t,t-1}$ has a $w$-suitable set for every $i\in I_t$, take $w_t=w$, set $V_{i,t}=S_i$ for $i\in I_t$, and leave all other candidate sets unchanged. Properties (i)--(iii) then hold. Every surviving edge incident with $I_t$ has exactly one endpoint in $I_t$, so its weight decreases by a factor of $1+1/d$. The new total contribution of these edges is
\[
(1+1/d)^{-1}\sum_{i\in I_t}\Phi_i(S_i)
\leq\sum_{i\in I_t}\Phi_i(V_{i,t-1}),
\]
which is their old total contribution. Other surviving terms are unchanged, and terms incident with $v_t$ disappear. Thus, we get $P_t\leq P_{t-1}$, proving (iv).

Otherwise, for each $w\in V_{t,t-1}$, some index $i\in I_t$ has no $w$-suitable set. By the pigeonhole principle, a fixed index $i$ fails for a set $W\subseteq V_{t,t-1}$ with $|W|\geq|V_{t,t-1}|/\Delta$. We fix such an index $i$ and set $A=V_{i,t-1}$ and $R=A$. We keep all sets $V_{j,t-1}$ fixed during the following deletion process.

While some $w\in W$ has at least $c|A|$ red neighbors in $R$, delete its entire red neighborhood $S$ in $R$. 
Since $S$ is not $w$-suitable, $\Phi_i(S)>(1+1/d)\Phi_i(A)$. 
The deleted sets are disjoint, and $|S|\Phi_i(S)$ is additive in $S$. 
Hence, if $D$ is their union, then $|D| = \sum_{S\text{ deleted}}|S|$ and
\[
(1+1/d)|D|\Phi_i(A) \leq \sum_{S\text{ deleted}}|S|\Phi_i(S)\leq|A|\Phi_i(A).
\]
If $\Phi_i(A)=0$, no such deletion is possible, as then any deleted non-empty subset $S$ of $A$ has $\Phi_i(S)=0$ and would be $w$-suitable. 
Otherwise, the total number of deleted vertices is $|D| \leq |A|/(1+1/d)=d|A|/(d+1)$. 
Hence, when the process stops, $|R| = |A| - |D| \geq|A|/(d+1)$ and every $w\in W$ has fewer than $c|A|$ red neighbors in $R$. The red density between $W$ and $R$ is therefore at most $(d+1)c$.

The term corresponding to $v_tv_i$ in (iv) at step $t-1$, together with $|W|\geq|V_{t,t-1}|/\Delta$ and $|R|\geq|V_{i,t-1}|/(d+1)$, bounds the blue density between $W$ and $R$ by
\[
\Delta(d+1)|E(H)|(1+1/d)^{d_{t,t-1}+d_{i,t-1}}\beta.
\]
Here $d_{t,t-1}\leq d$ and $d_{i,t-1}\leq d-1$, since $v_t$ is one more earlier neighbor of $v_i$. Since $c\leq\beta$, the non-green density is at most
\begin{align*}
\bigl(\Delta(d+1)|E(H)|(1+1/d)^{2d-1}+d+1\bigr)\beta
&=\bigl(\Delta d|E(H)|(1+1/d)^{2d}+d+1\bigr)\beta\\
&\leq (e^2/2+2)n\Delta^3\beta\\
&\leq32n\Delta^3\beta.
\end{align*}
In the first inequality, we used $|E(H)|\leq n\Delta/2$, $d\leq\Delta$, and $(1+1/d)^{2d}\leq e^2$.

Moreover, $ |W|\geq\frac{c^dL}{\Delta}\geq m$, and $|R|\geq\frac{c^dL}{d+1}\geq m$, since $d+1\leq2\Delta$. The sets $W$ and $R$ lie in different parts, and labeling them $W_1<W_2$ completes the proof.
\end{proof}

We are now ready to prove the main technical result of this section, which provides an upper bound on multicolor ordered Ramsey numbers when all but one of the graphs are $d$-degenerate ordered graphs with interval chromatic number two. The proof is based on an iterative argument in which we gradually control the density of edges in already used colors. In each step, we either embed the required ordered graph in the corresponding color or obtain a pair of large sets with controlled edge densities, which allows us to proceed to the next color. In the final step, this structure forces a monochromatic copy of the last ordered bipartite graph.

\begin{thm}\label{thm_d_degenerated_bipartite}
For all integers $m\geq1$ and $q\geq2$, let $H_1^<,\dots,H_{q-1}^<$ be ordered $d$-degenerate graphs on $n$ vertices with interval chromatic number two and maximum degree $\Delta$. Then
\[
r_<(H_1^<,\dots,H_{q-1}^<,K_{m,m}^<)\leq N,
\]
where
\begin{align*}
N={}&2^{d+1}\Delta n\,(32m)^{d(q-1)}\,8m
\left(\frac{4\Delta n}{(2n)^d}\right)^{q-2}
\bigl(64\Delta^3n^2\bigr)^{d(q-2)(q+1)/2}.
\end{align*}
\end{thm}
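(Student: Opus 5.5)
We argue by iterating over the colors. Fix a $q$-coloring of the edges of $K^<_N$ with colors $1,\dots,q$ that has no copy of $H_k^<$ in color $k$ for any $k\le q-1$; we aim to find a copy of $K_{m,m}^<$ in color $q$. The plan maintains, after processing colors $1,\dots,k$, two ordered sets $W^{(k)}_1<W^{(k)}_2$ of size $M_k$ such that the density of edges of colors $1,\dots,k$ between them is at most some $\beta_k$. At the end ($k=q-1$) we apply Lemma~\ref{lem_conlon_final} with $\chi=2$ and $n$ replaced by $m$, treating color $q$ as edges and colors $<q$ as non-edges. This requires $M_{q-1}\ge 8m$ and $\beta_{q-1}\le 1/(32m)$. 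So we will choose parameters with $\beta_{q-1}=1/(32m)$ and compute backwards.

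\textbf{Base step (color $1$).} Treat color $1$ as edges and all other colors as non-edges. Apply Lemma~\ref{lemma_conlon_sets} with $s=1$ and $c=\beta_1$. If there is no color-$1$ copy of $H_1^<$, we obtain $W_1<W_2$ of size at least $\beta_1^dN/(2^{d+1}\Delta n)$ with color-$1$ density at most $\beta_1$. The hypothesis $N\ge 2\Delta n(2/\beta_1)^d$ will be implied by the final size requirement. This step explains the factor $2^{d+1}\Delta n$ in $N$.

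\textbf{Inductive step (color $k+1$, for $k+1\le q-1$).} Here we combine the two lemmas, treating colors $1,\dots,k$ as ``blue'', color $k+1$ as ``red'', and colors $>k+1$ as ``green''.
\begin{enumerate}[1.]
\item Apply Lemma~\ref{lemma_divide_into_intervals_sparse} to $W^{(k)}_1<W^{(k)}_2$ with $c=\beta_k$ and block size $L$. This needs $M_k\ge 2nL$ and yields blocks $U_1<\dots<U_{2n}$ with pairwise blue density at most $2n\beta_k$.
\item Apply Lemma~\ref{lemma_one_step_density_coloring} with $\beta=2n\beta_k$, some $c$ with $c\le\beta$, and $L=2M_{k+1}\Delta/c^d$. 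Since there is no red $H_{k+1}^<$, we obtain sets of size $M_{k+1}$ with non-green density at most
\[\beta_{k+1}=32n\Delta^3\cdot 2n\beta_k=64\Delta^3n^2\beta_k.\]
\item The natural choice is $c=\beta$, so $c=2n\beta_k$. Each such step costs a factor $2nL/M_{k+1}=4\Delta n/(2n\beta_k)^d$ in size.
\end{enumerate}

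\textbf{Bookkeeping.} The densities form a geometric progression $\beta_k=\beta_1(64\Delta^3n^2)^{k-1}$. Setting $\beta_{q-1}=1/(32m)$ gives
\[\beta_1=(32m)^{-1}(64\Delta^3n^2)^{-(q-2)},\]
and hence $\beta_k^{-1}=32m\,(64\Delta^3n^2)^{q-1-k}$. The total size requirement is then
\[N\ge \frac{2^{d+1}\Delta n}{\beta_1^d}\cdot\prod_{k=1}^{q-2}\frac{4\Delta n}{(2n)^d\beta_k^d}\cdot 8m.\]
Multiplying out, $\beta_1^{-d}\prod_k\beta_k^{-d}$ contributes $(32m)^{d(q-1)}$ times $(64\Delta^3n^2)^{d[(q-2)+\sum_{j=1}^{q-2}(q-1-j)]}$. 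The exponent is $d[(q-2)+(q-2)(q-1)/2]=d(q-2)(q+1)/2$, which matches the stated $N$ exactly. This confirms the parameter choices.

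\textbf{Side conditions and main obstacle.} We must also check the side conditions of Lemma~\ref{lemma_one_step_density_coloring}: $c\in(0,1)$, $\beta\ge c$, and $d\le\Delta$. The last holds since $H$ has an edge and we may assume $d\le \Delta$. The main obstacle is the careful bookkeeping of the exponents. A secondary point is the requirement that $c<1$, i.e.\ $2n\beta_k<1$; this holds because $\beta_k\le 1/(32m)\cdot(64\Delta^3n^2)^{-1}$ for $k\le q-2$. Finally, each lemma's size hypothesis must follow from the previous step's output size; rounding is ignored as the paper allows.
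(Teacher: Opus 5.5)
Your proposal is correct and follows essentially the same route as the paper. You use Lemma~\ref{lemma_conlon_sets} with $s=1$ for color $1$, then alternate Lemma~\ref{lemma_divide_into_intervals_sparse} and Lemma~\ref{lemma_one_step_density_coloring} with $c=\beta=2n\beta_k$, and finish with Lemma~\ref{lem_conlon_final}, all with the same parameters $\beta_k=\beta_1(64\Delta^3n^2)^{k-1}$, $\beta_{q-1}=1/(32m)$ and the same backward size recurrence. The only step you skip is the integrality check. Rather than relying on the omitted-rounding convention, the paper verifies that each factor $4\Delta n/(2n\beta_k)^d$ is an integer divisible by $2n$, so all block sizes are exact.
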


\begin{proof}
We set $c_1=(32m(64\Delta^3n^2)^{q-2})^{-1}$, and $c_i=c_1(64\Delta^3n^2)^{i-1}$ for $1\leq i\leq q-1$. We also define $N_1=\frac{c_1^dN}{2^{d+1}\Delta n}$ and
\[
N_i=\frac{N_{i-1}(2nc_{i-1})^d}{4\Delta n}
\]
for every $i$ with $2\leq i\leq q-1$. The choice of $N$ is equivalent to
\[
N=8m\,\frac{2^{d+1}\Delta n}{c_1^d}
\prod_{i=2}^{q-1}\frac{4\Delta n}{(2nc_{i-1})^d}.
\]
Thus, $N_{q-1}=8m$ and $c_{q-1}=1/(32m)$.

The backward recurrence shows that all $N_i$ are integers, and that $N_{i-1}$ is divisible by $2n$ for $2\leq i\leq q-1$. Indeed, $c_{i-1}=1/(32m(64\Delta^3n^2)^{q-i})$, so each factor $4\Delta n/(2nc_{i-1})^d$ is an integer divisible by $2n$.

Consider a $q$-coloring of $K_N^<$. If it contains a copy of $H_1^<$ in color $1$, then we are done. Otherwise, we apply Lemma~\ref{lemma_conlon_sets} with $s=1$ and $c=c_1$ to the graph formed by edges of color  1 on the entire vertex set. 
We obtain sets $W_{1,1}<W_{2,1}$, each of size at least $N_1$, with density of edges of color 1 between them at most $c_1$.

Assume that, after step $i-1$, we have sets $W_{1,i-1}<W_{2,i-1}$, each of size at least $N_{i-1}$, with density at most $c_{i-1}$ in the union of colors $1,\dots,i-1$. Lemma~\ref{lemma_divide_into_intervals_sparse} then gives $n$ ordered subsets in each part, each of size $L = N_{i-1}/(2n)$, with density at most $2nc_{i-1}$ in these colors between every pair from different parts. Recolor colors $1,\dots,i-1$ blue, color $i$ red, and the remaining colors green. Since $L=2N_i\Delta/(2nc_{i-1})^d$ and $2nc_{i-1}\leq1/(1024m\Delta^3n)<1$, we can apply Lemma~\ref{lemma_one_step_density_coloring} with $c=\beta=2nc_{i-1}$ and $m=N_i$. We then either find a copy of $H_i^<$ in color $i$, or obtain sets $W_{1,i}<W_{2,i}$, each of size at least $N_i$, with non-green density at most
\[
32n\Delta^3\cdot2nc_{i-1}=c_i.
\]

If no ordered copy of $H_i^<$ is found in color $i$ for any $i\in[q-1]$, then the final sets have size at least $8m$ and density in the union of colors $1,\dots,q-1$ at most $1/(32m)$. Lemma~\ref{lem_conlon_final}, with $\chi=2$ and $n=m$, then gives a copy of~$K_{m,m}^<$ in color $q$.
\end{proof}

Theorem~\ref{thm-bipartite} now follows immediately from Theorem~\ref{thm_d_degenerated_bipartite}.

\begin{proof}[Proof of Theorem~\ref{thm-bipartite}]
The ordered graph $H^<$ in the statement is an ordered subgraph of~$K_{n,n}^<$. Thus if we apply Theorem~\ref{thm_d_degenerated_bipartite} with $m=n$ and all $H_i^<=H^<$, then
\begin{align*}
N&=2^{3dq^2+dq-8d+2q}\Delta^{q-1+3d(q-2)(q+1)/2}n^{d(q^2-q-1)+q}\\
& \leq 2^{4dq^2}\Delta^{q-1+3d(q-2)(q+1)/2}n^{dq^2}\\
&\leq 2^{4dq^2}(\Delta^{3/2}n)^{dq^2},
\end{align*}
where we used $q-1+3d(q-2)(q+1)/2\leq3dq^2/2$ and $d(q^2-q-1)+q\leq dq^2$ for $d\geq1$ and $q\geq2$. Consequently,
\[
r_<(H^<;q)\leq(16\Delta^{3/2}n)^{dq^2}
\leq(\Delta^{3/2}n)^{(1+4/\log n)dq^2},
\]
which proves the desired asymptotic bound.
\end{proof}

\section{Proof of Theorem~\ref{thm_triangleFree}}

In this section, we extend Theorem~\ref{thm-bipartite} to triangle-free ordered graphs with arbitrarily large interval chromatic numbers. The proof techniques are quite similar to those in the previous section, though some steps are more complicated. We again prove a stronger statement as we give an estimate on the off-diagonal multicolor ordered Ramsey numbers where one of the ordered graphs is replaced by an ordered complete multipartite graph; see Theorem~\ref{thm_d_degenerated_multipartite}.

We first state and prove a multipartite analog of Lemma~\ref{lemma_divide_into_intervals_sparse}, which works in the non-bipartite setting, but gives worse bounds on the density.

\begin{lemma}
\label{lemma_divide_into_intervals_multipartite}
For positive integers $n,m$ and $r\geq2$, let $U_1<\cdots<U_r$ be vertex sets, each containing at least $rnm$ vertices, forming an ordered complete $r$-partite graph colored blue and red. Assume that the blue density between $U_i$ and $U_j$ is at most $c\geq0$ for each $i<j$. Then there are sets $U_{i,1}<\cdots<U_{i,n}\subseteq U_i$ for each $i\in[r]$, each of size at least $m$, such that the blue density between $U_{i,k}$ and $U_{j,l}$ is at most $2r^2nc$ whenever $i<j$.
\end{lemma}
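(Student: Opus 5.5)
The plan is to mimic the proof of Lemma~\ref{lemma_divide_into_intervals_sparse}, but with $r$ parts. First, by averaging over independent uniformly random subsets of size exactly $rnm$ in each $U_i$, we can assume $|U_i|=rnm$ for every $i$. Indeed, the expected blue density between the chosen subsets of $U_i$ and $U_j$ equals the original density, which is at most $c$. Since we need this simultaneously for all pairs, we instead use a fixed deterministic reduction. The blue density between two sets is an average over pairs of subsets, so for a single pair one subset choice suffices. To get all pairs at once, we choose subsets for each $U_i$ so that the \emph{sum} over pairs $i<j$ of the blue densities is at most $\binom{r}{2}c$. Each pair density is then at most $\binom r2 c\le r^2c/2$. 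This extra factor is affordable, since the target bound is $2r^2nc$.

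Next, we partition each chosen subset, in its inherited order, into $rn$ consecutive blocks of size $m$. A pair of blocks $B\subseteq U_i$, $B'\subseteq U_j$ with $i<j$ is \emph{bad} if its blue density exceeds $2r^2nc$. Suppose $c_{ij}$ is the (reduced) density between the chosen parts, with $\sum_{i<j}c_{ij}\le \binom r2 c$. The total number of blue edges over all pairs is at most $\sum_{i<j} c_{ij}(rnm)^2$. Each bad pair carries more than $2r^2nc\,m^2$ blue edges. Hence the number of bad pairs is less than
\[
\frac{\binom r2 c\, r^2n^2m^2}{2r^2ncm^2}\le \frac{r^2 n}{4}\le (r-1)n\cdot\frac{r^2}{4(r-1)}.
\]
(If $c=0$ there are no bad pairs.) The count works cleanly with the simpler route of not reducing at all: bound each pair separately by $c$. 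The bad pairs between $U_i$ and $U_j$ then number fewer than $(rn)^2m^2c/(2r^2ncm^2)=n/2$. Summed over all $\binom r2$ pairs, this gives fewer than $r^2n/4$ bad pairs. So I would avoid the reduction and instead partition the first $rnm$ vertices only after choosing subsets per pair. Per-pair averaging does not commute across pairs, however, so the cleanest route is to keep the whole $U_i$ and split it into $rn$ consecutive blocks of size at least $m$ (allowing size $\ge m$, which is what the statement permits). Then the blue edges between $U_i$ and $U_j$ number at most $c|U_i||U_j|$. A bad pair $B,B'$ has more than $2r^2nc|B||B'|$ blue edges, and $|B||B'|\ge |U_i||U_j|/(rn)^2$ up to rounding if the blocks are near-equal. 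Hence there are fewer than $n/2$ bad pairs per $(i,j)$.

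Finally, we delete blocks. Each bad pair is destroyed by removing one of its two blocks, say the one in the part with the smaller index. The total number of bad pairs is less than $\binom r2 n/2\le r^2n/4$. In the worst case, though, they could all hit one part. To keep the deletion balanced, for each pair $(i,j)$ we cover the fewer than $n/2$ bad pairs by removing one endpoint each. The number of blocks removed from $U_i$ is then at most $(r-1)\cdot n/2<rn-n$, since $r\ge2$ gives $(r-1)n/2\le (r-1)n$. At least $n$ blocks therefore survive in every part, and we retain $n$ of them in order as $U_{i,1}<\dots<U_{i,n}$. All pairs of retained blocks from different parts are good, giving density at most $2r^2nc$.

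The main obstacle is the rounding in the block sizes when $|U_i|$ is not an exact multiple of $rn$. Rounding can make $|B||B'|$ smaller than $|U_i||U_j|/(rn)^2$ by up to a factor of $4$. I would handle this by first trimming each $U_i$ to its largest initial segment of size divisible by $rn$, which is still at least $rnm$. This trimming must not increase the density, so it is safer to choose a random subset of that size per part. The averaging over independent uniform subsets preserves every pairwise density in expectation simultaneously. Choosing to minimize $\sum_{i<j} d_B$ only bounds the sum, so I would accept this and count bad pairs globally. The number of bad pairs is then fewer than $\binom r2 n/2$ in total, and the same per-part deletion bound $(r-1)n/2\cdot$(at most one endpoint per bad pair) still leaves $n$ blocks per part.
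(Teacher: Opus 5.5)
Your proposal has a genuine gap. The route you finally commit to breaks at the last step. Once you choose the random subsets to minimize $\sum_{i<j}$ of the blue densities, you only know that this sum is at most $\binom r2 c$. A single pair of parts can then carry density close to $\binom r2 c$, and with threshold $2r^2nc$ it can have close to $\binom r2 n/2$ bad block pairs.

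Your per-part deletion bound $(r-1)n/2$ came from the per-pair count of fewer than $n/2$ bad pairs in your third paragraph. It does not follow from a global count of fewer than $\binom r2 n/2$, because those bad pairs may all lie between the same two parts. Deleting one endpoint per bad pair then only bounds the loss of a part by roughly $\binom r2 n/4$, which exceeds $(r-1)n$ once $r\geq 9$. So nothing you wrote justifies calling the factor $\binom r2$ ``affordable''. The constant $2r^2nc$ has slack only for a constant-factor loss plus a budget of $n$ deletions per side per pair of parts. A global count might be salvageable by a different selection argument, but you do not give one.

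The random-subset detour is unnecessary, and the rounding you flagged as the main obstacle is harmless. Partition each $U_i$, in order, into $rn$ consecutive intervals whose sizes differ by at most one. Since $|U_i|\geq rnm$, each interval has size at least $m$ and at least $|U_i|/(2rn)$. The factor $4$ lost to rounding is exactly what the constant absorbs. Indeed, $2n$ bad pairs between $U_i$ and $U_j$ would carry more than $2n\cdot 2r^2nc\cdot\frac{|U_i|}{2rn}\cdot\frac{|U_j|}{2rn}=c|U_i||U_j|$ blue edges, so there are at most $2n-1$ of them.

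Split these bad pairs into two groups of at most $n$ each. Delete the $U_i$-blocks of the first group and the $U_j$-blocks of the second. Each part then loses at most $n$ blocks per other part, so at most $(r-1)n$ in total, and $n$ blocks survive. With the correct count of $2n-1$, the unbalanced rule from your third paragraph would not suffice, since it could delete up to $(r-1)(2n-1)$ blocks from one part. The balanced split is what is needed; it is equivalent to the cover argument from Lemma~\ref{lemma_divide_into_intervals_sparse}, and it is what the paper uses.
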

\begin{proof}
We partition each $U_i$, in the given vertex order, into $rn$ intervals with sizes differing by at most one. 
Each interval has size at least $m$ and at least $|U_i|/(2rn)$. 
We call a pair of intervals from different parts \emph{bad} if its blue density exceeds $2r^2nc$. If $c=0$, there are no bad pairs. 
Otherwise, for each pair of parts $U_i,U_j$, there are fewer than $2n$ bad pairs, as otherwise $2n$ bad pairs would contain more than
\[
2n\cdot2r^2nc\cdot\frac{|U_i|}{2rn}\frac{|U_j|}{2rn}
=c|U_i||U_j|
\]
blue edges, which is impossible.

As in the proof of Lemma~\ref{lemma_divide_into_intervals_sparse}, the bad pairs between $U_i$ and $U_j$ can be covered by at most $n$ intervals in each part. Take the union of these covers over all pairs of parts. At most $(r-1)n$ intervals are removed from any one part, so at least $n$ remain, and we retain $n$ of them in each part.
\end{proof}

The following lemma is a multipartite analog of Lemma~\ref{lemma_one_step_density_coloring} for triangle-free graphs with arbitrary interval chromatic number.

\begin{lemma}\label{lemma_one_step_density_coloring_non_bipartite}
For positive integers $n,m,d,\Delta$, an integer $\chi\geq2$, and $c\in(0,1)$, let $H^<$ be a triangle-free ordered $d$-degenerate graph on $n$ vertices with maximum degree $\Delta$ and interval chromatic number $\chi$. Let $U_1<\cdots<U_\chi$ be the parts of an ordered complete $\chi$-partite graph, each of size at least $4m\chi n\Delta/c^d$, with edges colored red, blue, and green. If the blue density between each two parts is at most $c$, and there is no red ordered copy of $H^<$, then there are sets $W_1<W_2$ in different parts, each of size at least $m$, with non-green density between them at most $64\Delta^3\chi^2n^2c$.
\end{lemma}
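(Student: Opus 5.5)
We mimic the proof of Lemma~\ref{lemma_one_step_density_coloring}, with two adjustments: we first pass to a finer partition so that every vertex of $H^<$ gets its own candidate set, and we replace bipartiteness by triangle-freeness in the one place where it was used.

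\textbf{Setup.} Apply Lemma~\ref{lemma_divide_into_intervals_multipartite} with $r=\chi$ to the parts $U_1<\cdots<U_\chi$. This yields sets $U_{i,1}<\cdots<U_{i,n}\subseteq U_i$, each of size at least $L=4mn\Delta/c^d$, with blue density at most $\beta=2\chi^2nc$ between blocks in different parts. The size hypothesis $|U_i|\geq 4m\chi n\Delta/c^d$ is exactly $\chi n L$, so the lemma applies.

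Fix a partition of $H^<$ into $\chi$ independent intervals. A vertex that is the $k$th vertex of the $i$th interval is assigned $V=U_{i,k}$. Then picking one vertex from each assigned set respects the order of $H^<$, and adjacent vertices of $H$ lie in different parts. Now run the same greedy embedding along a degeneracy ordering $v_1,\dots,v_n$, maintaining invariants (i)--(iv) with the potential $P_t\leq\beta|E(H)|$.

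\textbf{Where triangle-freeness enters.} In the proof of Lemma~\ref{lemma_one_step_density_coloring}, bipartiteness was used only to ensure that $I_t=\{i>t: v_tv_i\in E(H)\}$ is independent. The point of that was that every surviving edge touching $I_t$ has exactly one endpoint in $I_t$, so its weight drops by exactly one factor $(1+1/d)$, and no pair $V_{i,t},V_{j,t}$ has both sides shrink simultaneously. The set $I_t$ is the neighbourhood of $v_t$, so it is independent precisely because $H$ is triangle-free. Hence the success case, in which each $i\in I_t$ gets a $w$-suitable set $S_i$, goes through verbatim, with $\Phi_i$ defined as before and $P_t\leq P_{t-1}$.

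\textbf{Failure case.} Argue as before: pigeonhole gives an index $i$ and a set $W$ with $|W|\geq |V_{t,t-1}|/\Delta$. The deletion process produces $R$ with $|R|\geq |V_{i,t-1}|/(d+1)$ and red density at most $(d+1)c$ between $W$ and $R$. The potential bound on the term for $v_tv_i$ gives blue density at most $\Delta(d+1)|E(H)|e^2\beta$. Altogether the non-green density is at most
\[
\bigl(\Delta(d+1)|E(H)|e^2\cdot 2\chi^2 n + d+1\bigr)c\leq 64\Delta^3\chi^2n^2c,
\]
using $|E(H)|\leq n\Delta/2$ and $d+1\leq 2\Delta$. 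The sets $W$ and $R$ lie in different parts $U_i$ because adjacent vertices were assigned to different parts, and their sizes are at least $c^dL/(2\Delta)\geq m$.

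\textbf{Main obstacle.} The step needing the most care is the bookkeeping in the potential argument. We must check that once the $V$'s are general blocks in $\chi$ parts, the order embedding and the independence of $I_t$ still suffice for the weighted sum to be nonincreasing. We must also check that the factor $2\chi^2 n$ lost in Lemma~\ref{lemma_divide_into_intervals_multipartite} still fits into the stated constant $64$. If it does not fit, the fallback is to apply the averaging step of Lemma~\ref{lemma_divide_into_intervals_sparse} separately to each pair of parts.
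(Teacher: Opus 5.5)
Your proposal is correct and follows the paper's proof essentially step for step. You refine the parts with Lemma~\ref{lemma_divide_into_intervals_multipartite} to get $\beta=2\chi^2nc$, assign blocks along an interval colouring, rerun the potential-based embedding using that $I_t$ is independent by triangle-freeness, and bound the non-green density of the failure pair within $64\Delta^3\chi^2n^2c$; your own computation already shows the constant fits, so the fallback in your last paragraph is unnecessary.

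One slip: the block size should be $L=4m\Delta/c^d$ (the paper uses $\lceil 2m\Delta/c^d\rceil$), not $4mn\Delta/c^d$. Only then is $\chi nL$ at most the assumed part size, and $c^dL/(2\Delta)\geq m$ still holds with this corrected $L$.
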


\begin{proof}
We apply Lemma~\ref{lemma_divide_into_intervals_multipartite} with $r=\chi$ and block-size parameter $\lceil2m\Delta/c^d\rceil$, temporarily identifying red and green. 
The part-size condition holds since $\chi n\lceil2m\Delta/c^d\rceil\leq4m\chi n\Delta/c^d$. 
We obtain sets $U_{j,1}<\cdots<U_{j,n}$ in each part, each of size at least $L=2m\Delta/c^d$, with blue density at most $\beta=2\chi^2nc$ between sets from different parts.

Let $v_1,\dots,v_n$ be a degeneracy ordering of $H$. If $v_i$ is the $k$th vertex in the original vertex order contained in the $j$th interval of an interval coloring of $H^<$, then we let $V_i=U_{j,k}$. This assignment preserves the given vertex order.

We now follow the proof of Lemma~\ref{lemma_one_step_density_coloring} with these candidate sets, shrinkage parameter $c$, and initial blue-density bound $\beta$, retaining properties (i)--(iv). We again write $d_{\mathrm B}(X,Y)$ for the blue density between $X$ and $Y$. In particular, we maintain
\[P_t=\sum_{\substack{t<i<j\\v_iv_j\in E(H)}}
(1+1/d)^{-(d_{i,t}+d_{j,t})}d_{\mathrm B}(V_{i,t},V_{j,t})\leq \beta|E(H)|.
\]
Initially, we set $V_{i,0}=V_i$ and $d_{i,0} = 0$, so that $P_0\leq \beta|E(H)|$ and the conditions are satisfied. 
If the process reaches step $n$, it gives a red ordered copy of $H^<$.

Assume that step $t-1$ has been completed. We let $I_t=\{i>t:v_tv_i\in E(H)\}$, and observe $|I_t|\leq\Delta$. Since $H$ is triangle-free, the set $I_t$ is independent. Thus, if $i\in I_t$ and $j>t$ is adjacent to $i$, then $j\notin I_t$ and its candidate set remains $V_{j,t-1}$. For $i\in I_t$ and each nonempty $S\subseteq V_{i,t-1}$, we set
\[
\Phi_i(S)=\sum_{\substack{j>t\\v_iv_j\in E(H)}}
(1+1/d)^{-(d_{i,t-1}+d_{j,t-1})}d_{\mathrm B}(S,V_{j,t-1}).
\]
All weights $(1+1/d)^{-(d_{i,t-1}+d_{j,t-1})}$ and comparison sets $V_{j,t-1}$ in this definition remain fixed during the step.

For $w\in V_{t,t-1}$ and $i\in I_t$, call a set $S_i$ \emph{$w$-suitable} if it lies in the red neighborhood of $w$ in $V_{i,t-1}$, has size at least $c|V_{i,t-1}|$, and satisfies $\Phi_i(S_i)\leq (1+1/d)\Phi_i(V_{i,t-1})$.

If some $w$ has a $w$-suitable set for every $i\in I_t$, take $w_t=w$, set $V_{i,t}=S_i$ for every $i\in I_t$, and leave all other candidate sets unchanged. 
Properties (i)--(iii) then hold. 
Every surviving edge incident with $I_t$ has exactly one endpoint in $I_t$, so its weight decreases by a factor of $1+1/d$. 
The new total contribution of these edges is
\[
(1+1/d)^{-1}\sum_{i\in I_t}\Phi_i(S_i)\leq\sum_{i\in I_t}\Phi_i(V_{i,t-1}),
\]
which is their old total contribution. Other surviving terms are unchanged, and terms incident with $v_t$ disappear. Thus $P_t\leq P_{t-1}$ and property~(iv) is satisfied.

Otherwise, for each $w\in V_{t,t-1}$, some index $i\in I_t$ has no $w$-suitable set. By the pigeonhole principle, a fixed index $i$ fails for a set $W\subseteq V_{t,t-1}$ with $|W|\geq|V_{t,t-1}|/\Delta$. Fix this $i$ and put $A=V_{i,t-1}$ and $R=A$. Keep all sets $V_{j,t-1}$ fixed during the following deletion process.

While some $w\in W$ has at least $c|A|$ red neighbors in $R$, delete its entire red neighborhood $S$ in $R$. Since $S$ is not $w$-suitable, $\Phi_i(S)>(1+1/d)\Phi_i(A)$. The deleted sets are disjoint, and $|S|\Phi_i(S)$ is additive in $S$. Hence, if $D$ denotes their union, then we have $|D| = \sum_{S\text{ deleted}}|S|$ and
\[
(1+1/d)|D|\Phi_i(A) \leq \sum_{S\text{ deleted}}|S|\Phi_i(S)\leq|A|\Phi_i(A).
\]
If $\Phi_i(A)=0$, no such deletion is possible, as then any deleted non-empty subset $S$ of $A$ has $\Phi_i(S)=0$ and would be $w$-suitable.  
Otherwise, the total number of deleted vertices is $|D| \leq |A|/(1+1/d)=d|A|/(d+1)$. Hence, when the process stops, $|R| = |A| - |D| \geq|A|/(d+1)$ and every $w\in W$ has fewer than $c|A|$ red neighbors in $R$. The red density between $W$ and $R$ is therefore at most $(d+1)c$.

The term corresponding to $v_tv_i$ in $P_{t-1}$, together with $|W|\geq|V_{t,t-1}|/\Delta$ and $|R|\geq|V_{i,t-1}|/(d+1)$, bounds the blue density between $W$ and $R$ by at most
\[
\Delta(d+1)|E(H)|(1+1/d)^{d_{t,t-1}+d_{i,t-1}}\beta.
\]
Here $d_{t,t-1}\leq d$ and $d_{i,t-1}\leq d-1$, since $v_t$ is one more earlier neighbor of $v_i$. Since $c\leq\beta$, the non-green density is at most
\begin{align*}
\bigl(\Delta(d+1)|E(H)|(1+1/d)^{2d-1}+d+1\bigr)\beta
&=\bigl(\Delta d|E(H)|(1+1/d)^{2d}+d+1\bigr)\beta\\
&\leq(e^2/2+2)n\Delta^3\beta\\
&\leq32n\Delta^3\beta.
\end{align*}
In the first inequality, we used $|E(H)|\leq n\Delta/2$, $d\leq\Delta$, and $(1+1/d)^{2d}\leq e^2$.

Moreover, $|W|\geq\frac{c^dL}{\Delta}\geq m$, and $|R|\geq\frac{c^dL}{d+1}\geq m$, since $d+1\leq2\Delta$. All relevant edges join distinct interval classes, so $W$ and $R$ lie in different parts. We label them $W_1<W_2$, and note that their non-green density is at most $32n\Delta^3\beta=64\Delta^3\chi^2n^2c$, which finishes the proof.
\end{proof}

Now we prove the main technical step in the proof of Theorem~\ref{thm_triangleFree}. Roughly speaking, the lemma shows that in any $q$-coloring, we either find a monochromatic copy of one of the ordered graphs with given interval chromatic number and degeneracy, or we can find many large disjoint subsets of the vertex set so that the density of edges in all colors except the last one is low between every pair of subsets. The proof proceeds by double induction on the number of colors and the number of parts.

\begin{lemma}\label{lemma_multipartite}
For integers $n\geq8$, $m\geq1$, $q\geq2$, $\chi\geq2$, and $1\leq s\leq z=\lceil\log\chi\rceil$, let $H_1^<,\dots,H_{q-1}^<$ be ordered $d$-degenerate graphs on $n$ vertices with maximum degree $\Delta$ and interval chromatic number $\chi$. Assume that $H_i$ is triangle-free for $2\leq i\leq q-1$. For $c\in(0,1)$, let
\[
N_{q,s,m,c}=(64\Delta^3n^2\chi^2)^{q^2sdz^{q-2}}c^{-(q-1)sdz^{q-2}}2^{2(q-1)s^2dz^{q-2}}m.
\]
If an integer $N\geq N_{q,s,m,c}$ and a $q$-coloring of $K_N^<$ contains no ordered copy of $H_i^<$ in color $i$ for any $i\in[q-1]$, then there are sets $W_1<\cdots<W_{2^s}$, each of size at least $m$, such that the density of edges in colors $[q-1]$ between any two of them is at most $c$.
\end{lemma}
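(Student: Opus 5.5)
We argue by double induction: the outer induction is on $q$, and for fixed $q$ the inner induction is on $s$.

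\textbf{Base of the outer induction, $q=2$.} Here $z^{q-2}=1$, so the bound reads $N_{2,s,m,c}=(64\Delta^3n^2\chi^2)^{4sd}c^{-sd}2^{2s^2d}m$. A coloring with no copy of $H_1^<$ in color $1$ is just a graph on $[N]$ with no ordered copy of $H_1^<$. We apply Lemma~\ref{lemma_conlon_sets} directly with this $s$ and $c$. It gives sets $W_1<\cdots<W_{2^s}$ with color-$1$ density at most $c$ and sizes at least $c^{sd}N/(2^{sd+1}\Delta n)^s$. This is at least $m$, because $N_{2,s,m,c}$ dominates both $(2\Delta n(2^sc^{-1})^d)^s$ and $(2^{sd+1}\Delta n)^s c^{-sd}m$. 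Indeed, $2^{s^2d}$ is absorbed by $2^{2s^2d}$, and $(2\Delta n)^s$ is absorbed by the first factor. This case needs no triangle-freeness, which is consistent with the hypothesis only requiring it for $i\geq2$.

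\textbf{Inductive step for $q\geq3$.} The idea is to use $q-1$ colors to produce many parts, and then use Lemma~\ref{lemma_one_step_density_coloring_non_bipartite} to kill color $q-1$ while halving the number of useful parts.

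For $s=1$, merge colors $q-1$ and $q$ into one color. Apply the $(q-1)$-color statement with $s'=z$, a smaller density $c'$ and a larger size $m'$, both chosen below. This yields $2^z\geq\chi$ sets, of which we keep $\chi$. Between any two kept sets, colors $[q-2]$ have density at most $c'$.

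Now call colors $[q-2]$ blue, color $q-1$ red, and color $q$ green. Since there is no red copy of $H_{q-1}^<$, Lemma~\ref{lemma_one_step_density_coloring_non_bipartite} (with $c'$ in the role of $c$) gives two sets $W_1<W_2$ of size at least $m$ with non-green density at most $64\Delta^3\chi^2n^2c'$. So we choose $c'=c/(64\Delta^3\chi^2n^2)$ and $m'=4m\chi n\Delta/c'^d$.

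For $s>1$, we build $2^s$ sets as follows. First apply the $(s-1)$ case to get $2^{s-1}$ sets. Then run the $s=1$ argument inside each of them, where each $W_j$ plays the role of $K_N^<$, so each $W_j$ contains two good subsets. This gives $2^s$ sets in total.

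Pairs of sets coming from different old $W_j$'s only inherit the old density bound, and passing to subsets can increase density. We handle this by averaging, as in Lemma~\ref{lemma_divide_into_intervals_sparse}. Alternatively, we apply the $s=1$ step to the whole collection at once: split each part into $\chi$ blocks using Lemma~\ref{lemma_divide_into_intervals_multipartite}. The simplest route is to go back to the $(q-1)$-color case with $s'=sz$ and $2^{sz}$ sets. We group them into $2^s$ consecutive blocks of $2^z\geq\chi$ sets each and run Lemma~\ref{lemma_one_step_density_coloring_non_bipartite} in each block. The issue is the cross-block pairs: their density is controlled by $c'$ only for the original sets, not for the small sets $W$ and $R$ we extract.

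We fix the cross-block pairs by making $c'$ smaller by the size-loss factor. The extracted sets have size at least a $(c'^d/(4\chi n\Delta))$-fraction of their parents. Hence the colors-$[q-1]$ density between extracted sets in different blocks is at most $c'$ divided by the square of this fraction. We must also control color $q-1$ across blocks. For this, when applying the $(q-1)$-color statement we merge color $q-1$ into the last color, and then handle color $q-1$ across blocks by repeating this two-level scheme: an inner induction on $s$ that first halves the number of blocks and then recurses. This is what produces the $2^{2(q-1)s^2dz^{q-2}}$ factor.

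\textbf{Bookkeeping and the main obstacle.} The exponent is tracked by the recurrence $E(q,s)\approx z\,E(q-1,s)+O(sd)$ in the exponents of $n$, $\Delta$, $\chi$ and $c^{-1}$. It gives $z^{q-2}$ growth, and $q^2$ arises from summing the $O(q)$ losses per color. I expect the main obstacle to be controlling the color-$(q-1)$ density across different blocks while keeping the exponent of the form $q^2sdz^{q-2}$. The check is that the $s$-fold iteration contributes only a factor $s$ times, plus $s^2$ in the power of $2$, and not multiplicatively in $z$.
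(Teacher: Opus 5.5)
\textbf{There is a genuine gap: the case $q\geq3$, $s\geq2$ is not proved.} Your base case $q=2$ matches the paper. So does your $s=1$ step for $q\geq3$: merge colors $q-1$ and $q$, call the $(q-1)$-color statement with $s=z$ and $c'=c/(64\Delta^3n^2\chi^2)$, then apply Lemma~\ref{lemma_one_step_density_coloring_non_bipartite}. You only omit the routine check $N_{q-1,z,m',c'}\le N_{q,1,m,c}$.

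For $s\geq2$, both of your routes fail. Your first route (recurse inside each of the $2^{s-1}$ sets) breaks because density is not hereditary. For subsets $X\subseteq W_j$ and $Y\subseteq W_k$ of size $m$, the only bound you get is $c_{\mathrm{old}}|W_j||W_k|/m^2\geq c_{\mathrm{old}}(N_{q,1,m,c}/m)^2$. Shrinking $c_{\mathrm{old}}$ to compensate makes the exponents of $c^{-1}$ and $64\Delta^3n^2\chi^2$ quadratic in $dz^{q-2}$, far above $q^2sdz^{q-2}$. Averaging as in Lemma~\ref{lemma_divide_into_intervals_sparse} does not help, since the recursively produced sets are dictated by the coloring, not chosen from a partition you control. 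Your ``simplest route'' calls the $(q-1)$-color statement with $s'=sz>z$, outside its range. More seriously, after merging colors $q-1$ and $q$ it gives no control of color $q-1$ between different blocks. ``Repeating the two-level scheme'' is not an argument for this, and your heuristic recurrence $E(q,s)\approx zE(q-1,s)+O(sd)$ does not account for the factor $2^{2(q-1)s^2dz^{q-2}}$.

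\textbf{The missing idea} is to stay at $q$ colors and replace the density condition by a per-vertex degree condition, which is hereditary.
\begin{enumerate}
\item Apply the already-proved case $s=1$ with density $c/2^{s+1}$ and size $M=2\lceil N_{q,s-1,m,c}\rceil$ to get $U_1<U_2$.
\item By Markov, deleting from $U_1$ the vertices with more than $(c/2^s)|U_2|$ neighbors in $U_2$ in colors $[q-1]$ removes at most half of $U_1$. Recurse with $s-1$ inside what remains to get $X_1<\cdots<X_{2^{s-1}}$.
\item Delete from $U_2$ the vertices with more than $c|X_j|$ such neighbors in $X_j$. The degree bound on the retained part of $U_1$ shows that each $j$ removes at most $|U_2|/2^s$ vertices, so at least half of $U_2$ survives. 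Recurse inside the survivors.
\item Every surviving vertex of $U_2$ has at most $c|X_j|$ neighbors in colors $[q-1]$ in $X_j$. This bound passes to arbitrary subsets, so the cross density between the two families is at most $c$.
\end{enumerate}
The size requirement is
\[
N_{q,1,M,c/2^{s+1}}\leq 4\cdot2^{-(q-1)dz^{q-2}(3s-5)}N_{q,s,m,c}\leq N_{q,s,m,c}.
\]
The factor $2^{s+1}$ compounding over the $s$ levels of this recursion is exactly where $2^{2(q-1)s^2dz^{q-2}}$ comes from. It does not come from a recurrence in $q$.
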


\begin{proof}
We use a double induction on $q$ and $s$. Notice that $64\Delta^3n^2\chi^2\geq2^{2z}$.
For $q=2$, the required conclusion follows from Lemma~\ref{lemma_conlon_sets} applied to the graph formed by edges of color 1. Indeed, we have
\[N_{2,s,m,c}=(64\Delta^3n^2\chi^2)^{4sd}c^{-sd}2^{2s^2d}m \geq c^{-sd}(2^{sd+1}\Delta n)^s m,\]
and thus, Lemma~\ref{lemma_conlon_sets} gives sets of size at least $m$.

Let $q\geq3$ and suppose first that $s=1$. We set $D=dz^{q-2}$, $c'=c/(64\Delta^3n^2\chi^2)$, and
\[
m'=\left\lceil\frac{4\chi^2n\Delta}{(c')^d}m\right\rceil
\leq 64\Delta^3n^2\chi^2(c')^{-d}m.
\]
Since $D\geq d$ and $64\Delta^3n^2\chi^2 \geq2^{2z}$, we obtain
\begin{align*}
N_{q-1,z,m',c'}
&=(64\Delta^3n^2\chi^2)^{(q-1)^2D}(c')^{-(q-2)D}2^{2(q-2)zD}m'\\
&\leq (64\Delta^3n^2\chi^2)^{((q-1)^2+q-2)D+d+1}
c^{-(q-2)D-d}2^{2(q-2)zD}m\\
&\leq (64\Delta^3n^2\chi^2)^{(q^2-3)D+d+1}c^{-(q-1)D}m\\
&\leq (64\Delta^3n^2\chi^2)^{q^2D}c^{-(q-1)D}m
\leq N_{q,1,m,c}.
\end{align*}

We identify colors $q-1$ and $q$ and apply the induction hypothesis for $q-1$ colors and $s=z$, with parameters $m',c'$. This gives $2^z$ ordered sets, and we retain the first $\chi$ of them. Each has size at least $m'$, and the density in colors $[q-2]$ between any two is at most $c'$. We replace colors $1,\dots,q-2$ with blue, color $q-1$ with red, and color $q$ with green. We then apply Lemma~\ref{lemma_one_step_density_coloring_non_bipartite} to these $\chi$ parts and $H_{q-1}^<$, which is triangle-free. The part-size condition follows from $m'\geq4m\chi n\Delta/(c')^d$. A red copy of $H^<_{q-1}$ is excluded by assumption, so the lemma gives two ordered sets of size at least $m$ with non-green density at most $64\Delta^3n^2\chi^2c'=c$. This proves the case $s=1$.

Now assume that $s\geq2$. We set $D=dz^{q-2}$ and $M=2\lceil N_{q,s-1,m,c}\rceil$, so that $M\leq4N_{q,s-1,m,c}$. Direct substitution gives
\[
N_{q,1,M,c/2^{s+1}}
\leq4\cdot 2^{-(q-1)D(3s-5)}N_{q,s,m,c}
\leq N_{q,s,m,c}.
\]
For the last inequality, we used $q\geq3$, $s\geq2$, and $z\geq s$, so that $(q-1)D(3s-5)\geq4$.

We apply induction for $q$ colors and $s=1$, with parameters $M$ and $c/2^{s+1}$, obtaining sets $U_1<U_2$, each of size at least $M$, with density at most $c/2^{s+1}$ in colors $[q-1]$. We remove from $U_1$ every vertex with more than $(c/2^s)|U_2|$ neighbors in $U_2$ in these colors. At most half the vertices are removed, and  the remaining set $U_1'$ has size at least $N_{q,s-1,m,c}$, so the induction hypothesis gives sets
\[
U_{1,1}<\cdots<U_{1,2^{s-1}}\subseteq U_1',
\]
each of size at least $m$, with pairwise density at most $c$ in colors $[q-1]$.

For each $j$, let $X_{2,j}$ consist of the vertices in $U_2$ with more than $c|U_{1,j}|$ neighbors in $U_{1,j}$ in colors $[q-1]$. The degree condition on $U_1'$ gives
\[
c|U_{1,j}|\,|X_{2,j}|
\leq\frac{c}{2^s}|U_{1,j}|\,|U_2|,
\]
so $|X_{2,j}|\leq|U_2|/2^s$. Hence, the set
\[
U_2'=U_2\setminus \left(\bigcup_{j=1}^{2^{s-1}}X_{2,j}\right)
\]
has size at least $|U_2|/2\geq N_{q,s-1,m,c}$. 
We apply induction inside $U_2'$ for $q$ colors and $s-1$ with parameters $m$ and $c$ to obtain sets $U_{2,1}<\cdots<U_{2,2^{s-1}}$ contained in $U_2'$, each of size at least $m$, such that the density of edges in colors $[q-1]$ between any two of these sets is at most $c$.
Every vertex of~$U_2'$ has at most $c|U_{1,j}|$ neighbors in $U_{1,j}$ in colors $[q-1]$, so the density in colors $[q-1]$ between any set from the first family and any set from the second family is at most $c$.
Taking the two families in this order gives the required $2^s$ sets.
\end{proof}

We now prove the main result of this section, which gives the final estimate as an immediate corollary. The theorem gives an upper bound on multicolor ordered Ramsey numbers by forcing either a monochromatic copy of one of the given ordered graphs with fixed interval chromatic number and degeneracy or a complete multipartite ordered graph in the remaining color.

\begin{thm}\label{thm_d_degenerated_multipartite}
For integers $m\geq1$, $\chi\geq2$, and $q\geq2$, let $H_1^<,\dots,H_{q-1}^<$ be ordered $d$-degenerate graphs on $n\geq8$ vertices with interval chromatic number $\chi$ and maximum degree~$\Delta$. Assume that $H_i$ is triangle-free for $2\leq i\leq q-1$, and put $L=\max\{n,m\}$. Then
\begin{align*}
r_<(H_1^<,\dots,H_{q-1}^<,K_m^<(\chi))
\leq(64\Delta^3L^2\chi^2)^{(q+1)^2d\lceil\log\chi\rceil^{q-1}}4\chi m.
\end{align*}
\end{thm}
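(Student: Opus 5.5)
The plan is to combine Lemma~\ref{lemma_multipartite} with Lemma~\ref{lem_conlon_final}. Set $z=\lceil\log\chi\rceil$, so that $2^z\geq\chi$. Consider a $q$-coloring of $K_N^<$ with $N$ equal to the claimed bound, and suppose it contains no copy of $H_i^<$ in color $i$ for any $i\in[q-1]$. We then apply Lemma~\ref{lemma_multipartite} with $s=z$ and with parameters
\[
m_0=4\chi m, \qquad c=\frac{1}{8\chi^2 m}.
\]
This gives sets $W_1<\cdots<W_{2^z}$, each of size at least $4\chi m$, and the density of colors $[q-1]$ between any two of them is at most $1/(8\chi^2m)$. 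We keep the first $\chi$ of these sets. In the graph of color $q$, the density of non-edges between any two retained sets is then at most $1/(8\chi^2m)$. So Lemma~\ref{lem_conlon_final}, applied with its $n$ replaced by $m$, yields a copy of $K_m^<(\chi)$ in color $q$. The hypotheses match those of Lemma~\ref{lemma_multipartite} exactly: triangle-freeness is needed only for $H_2^<,\dots,H_{q-1}^<$, and $n\geq 8$ is assumed.

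What remains is to check that $N\geq N_{q,z,4\chi m,c}$. Substituting, we get
\[
N_{q,z,4\chi m,c}=(64\Delta^3n^2\chi^2)^{q^2 z d z^{q-2}}(8\chi^2 m)^{(q-1)zdz^{q-2}}2^{2(q-1)z^2dz^{q-2}}\cdot 4\chi m .
\]
We bound each factor by a power of $B=64\Delta^3L^2\chi^2$, which is at least $64\Delta^3n^2\chi^2$ since $L\geq n$.

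\begin{itemize}
\item The first factor is at most $B^{q^2dz^{q-1}}$.
\item Since $8\chi^2m\leq B$ (because $m\leq L\leq L^2$), the second factor is at most $B^{(q-1)dz^{q-1}}$.
\item The factor $2^{2(q-1)z^2dz^{q-2}}$ is $(2^{2z})^{(q-1)dz^{q-1}}$. Since $2^{2z}\leq 4\chi^2\leq B$, it is at most $B^{(q-1)dz^{q-1}}$.
\end{itemize}

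Multiplying, the exponent of $B$ is at most $(q^2+2q-2)dz^{q-1}\leq (q+1)^2dz^{q-1}$. The leftover factor $4\chi m$ is exactly the one appearing in the statement.

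The main obstacle is this exponent bookkeeping. In particular, the $2^{2s^2\cdots}$ term needs care, since $s=z$ contributes an extra factor $z$ in its exponent, and we must make sure it is absorbed by $B$ as above. A second point to check is the case $q=2$. There Lemma~\ref{lemma_multipartite} reduces to Lemma~\ref{lemma_conlon_sets} and no triangle-free assumption is needed, but the same computation with $z^{q-2}=1$ should still go through. We would also verify that $c\in(0,1)$, and that rounding $m_0$ and $c$ causes no trouble, since both are already integers or reciprocals of integers.
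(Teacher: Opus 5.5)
Your proposal is correct and is essentially the paper's proof: it also applies Lemma~\ref{lemma_multipartite} with $s=z$, set size $4\chi m$ and $c=1/(8\chi^2m)$, bounds $64\Delta^3n^2\chi^2$, $c^{-1}$ and $2^{2z}$ each by $B=64\Delta^3L^2\chi^2$ to get the exponent $(q^2+2q-2)dz^{q-1}\leq(q+1)^2dz^{q-1}$, and finishes by applying Lemma~\ref{lem_conlon_final} to the first $\chi$ sets. The case $q=2$ needs no separate check, since it is already covered by the base case of Lemma~\ref{lemma_multipartite}.
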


\begin{proof}
To abbreviate, we set $z=\lceil\log\chi\rceil$, $c=1/(8\chi^2m)$, $A=64\Delta^3n^2\chi^2$, and $B=64\Delta^3L^2\chi^2$. Since $A\leq B$, $c^{-1}\leq B$, and $2^{2z}\leq B$, we obtain
\begin{align*}
N_{q,z,4\chi m,c}
&=A^{q^2dz^{q-1}}c^{-(q-1)dz^{q-1}}
2^{2(q-1)dz^q}4\chi m\\
&\leq B^{(q^2+2q-2)dz^{q-1}}4\chi m\\
&\leq B^{(q+1)^2dz^{q-1}}4\chi m.
\end{align*}

Consider an arbitrary $q$-coloring of the ordered complete graph on $B^{(q+1)^2dz^{q-1}}4\chi m$ vertices. 
If it contains a copy of some $H_i^<$ in color $i \in [q-1]$, then we are done. 
Otherwise, Lemma~\ref{lemma_multipartite} gives $2^z \geq \chi$ ordered sets, each of size at least $4\chi m$, with density between them at most $1/(8\chi^2m)$ in colors $[q-1]$. 
We retain the first $\chi$ sets, and apply Lemma~\ref{lem_conlon_final} to the ordered graph formed by edges of color $q$ with the parameter $n=m$, obtaining an ordered copy of $K_m^<(\chi)$ in color $q$.
\end{proof}

Finally, we deduce Theorem~\ref{thm_triangleFree} as an immediate corollary.

\begin{proof}[Proof of Theorem~\ref{thm_triangleFree}]
Put $z=\lceil\log\chi\rceil$. We assume $n\geq8$, since for $2\leq n<8$ the classical bound gives
\[
r_<(H^<;q)\leq r(K_n;q)\leq q^{qn}\leq n^{7q^2}\leq n^{21q^2dz^{q-1}}.
\]

Then, we apply Theorem~\ref{thm_d_degenerated_multipartite} with $m=n$ and $H_i^<=H^<$ for every $i$. Since $H^<$ is an ordered subgraph of $K_n^<(\chi)$, and $\Delta,\chi\leq n$, we obtain
\begin{align*}
r_<(H^<;q)
&\leq(64\Delta^3n^2\chi^2)^{(q+1)^2dz^{q-1}}4\chi n\\
&\leq n^{[9(q+1)^2+3]dz^{q-1}}\\
&\leq n^{21q^2dz^{q-1}}.
\end{align*}
Here we used $64n^7\leq n^9$ and $4n^2\leq n^3$ for $n\geq8$, and $9(q+1)^2+3\leq21q^2$ for $q\geq2$.
\end{proof}

\section{Proof of Theorem~\ref{thm_d_degenerated_multipartite_result}}

In this section, we extend the argument to general ordered graphs. The proof techniques are similar to those in the previous two cases. The main difference is that the two candidate sets corresponding to an edge may both be restricted in one step. We account for these simultaneous restrictions by an additional factor in the density invariant.

In this section, let $H^<$ be an ordered $d$-degenerate graph on $n\geq3$ vertices, with interval chromatic number $\chi$ and maximum degree $\Delta$. We assume that $d \geq 2$ and fix a degeneracy ordering $v_1,\dots,v_n$ of~$H^<$. 
We fix an integer $p$ with $2\leq p\leq d$ such that every edge $v_iv_j$ of $H^<$ with $i<j$ has at most $p-1$ common neighbors among $v_1,\dots,v_{i-1}$. The choice $p=d$ always satisfies this condition, since $v_j$ has at most $d-1$ earlier neighbors other than $v_i$. We aim to show that
\[
r_<(H^<;q)  \leq n^{60dp^{q-2}\lceil\log\chi\rceil^{q-1}}.
\]

For simplicity, we set $z=\lceil\log\chi\rceil$, and
\[
 A=\left\lceil\max\left\{2^{2z},2(d+1)(8\Delta^2\chi^2n^2)^{1/p}\right\}\right\rceil.
\]
We first prove a general version of Lemmas~\ref{lemma_one_step_density_coloring} and~\ref{lemma_one_step_density_coloring_non_bipartite}.

\begin{lemma}\label{lemma_one_step_density_coloring_general}
Let $H^<$, $p$, and the fixed degeneracy ordering be as above. 
For a positive integer $m$ and real numbers $\beta\geq0$ and $\lambda\in(0,1)$, let $U_1<\cdots<U_n$ be the parts of an ordered complete $n$-partite graph, each of size at least $L\geq2m\Delta\lambda^{-d}$, with edges colored red, blue, and green. 
If the blue density between each two parts is at most $\beta$, and there is no red ordered copy of $H^<$, then there are sets $W_1<W_2$ in parts corresponding to adjacent vertices of $H^<$, each of size at least $m$, with non-green density between them at most
\[
(d+1)\lambda+8\Delta(d+1)\beta\lambda^{-(p-1)}|E(H)|.
\]
Here $U_k$ corresponds to the $k$th vertex in the given vertex order of $H^<$.
\end{lemma}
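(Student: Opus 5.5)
The plan is to repeat the greedy degeneracy-order embedding of Lemma~\ref{lemma_one_step_density_coloring} and Lemma~\ref{lemma_one_step_density_coloring_non_bipartite}, with $\lambda$ as the shrinkage parameter and $V_i=U_k$, where $v_i$ is the $k$th vertex in the vertex order of $H^<$. No preliminary splitting into blocks is needed, since there are already $n$ parts, one per vertex. At step $t$ we keep embedded vertices $w_1,\dots,w_t$ and candidate sets $V_{i,t}$ for $i>t$. Properties (i)--(iii) are as before, with $|V_{i,t}|\ge\lambda^{d_{i,t}}|V_i|$.

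The new feature is that $I_t=\{i>t: v_tv_i\in E(H)\}$ need not be independent, so both candidate sets of an edge $v_iv_j$ with $i,j\in I_t$ may shrink in the same step. To absorb this, we change the potential (iv) to
\[
P_t=\sum_{\substack{t<i<j\\ v_iv_j\in E(H)}}(1+1/d)^{-(d_{i,t}+d_{j,t})}\lambda^{e_{ij,t}}\,d_{\mathrm B}(V_{i,t},V_{j,t}),
\]
where $e_{ij,t}$ is the number of common neighbours of $v_i,v_j$ among $v_1,\dots,v_t$. By the choice of $p$ we have $e_{ij,t}\le p-1$. We aim to show $P_t\le P_0\le\beta|E(H)|$.

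In step $t$ we process the indices of $I_t$ one at a time in increasing order. For $i\in I_t$ define $\Phi_i(S)$ as in the earlier lemmas, with the new weights, summing over all neighbours $j>t$ of $v_i$. Comparison sets are the current sets: already-restricted ones for $j\in I_t$ processed earlier, and $V_{j,t-1}$ otherwise. Call $S$ $w$-suitable if $S\subseteq N_R(w)\cap V_{i,t-1}$, $|S|\ge\lambda|V_{i,t-1}|$, and $\Phi_i(S)\le(1+1/d)\Phi_i(V_{i,t-1})$. If some $w$ has suitable sets for all $i\in I_t$, we set $w_t=w$ and show $P_t\le P_{t-1}$. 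Edges with exactly one endpoint in $I_t$ behave exactly as before. For an edge $v_iv_j$ with $i<j$ both in $I_t$, the choice of $S_i$ controls $d_{\mathrm B}(S_i,V_{j,t-1})$. Later replacing $V_{j,t-1}$ by $S_j$ with $|S_j|\ge\lambda|V_{j,t-1}|$ can inflate the density by a factor at most $\lambda^{-1}$, and this is cancelled exactly because $e_{ij}$ increases by one, as $v_t$ is a new common neighbour. Both weights still drop by $(1+1/d)^{-1}$, which leaves slack. This bookkeeping, namely organising the sequential choice so that the $\Phi$'s for different $i\in I_t$ do not interfere and the potential provably does not increase, is the main obstacle. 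If the sequential approach is messy, I would instead define $\Phi_i$ only against sets not in $I_t$ together with the $I_t$-neighbours of larger index, and charge each simultaneous shrink to the single factor $\lambda^{-1}$.

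If no $w$ works, pigeonhole gives an index $i\in I_t$ failing on a set $W\subseteq V_{t,t-1}$ with $|W|\ge|V_{t,t-1}|/\Delta$. The deletion process of the earlier proofs, with threshold $\lambda|A|$, yields $R\subseteq A=V_{i,t-1}$ with $|R|\ge|A|/(d+1)$ and red density between $W$ and $R$ below $(d+1)\lambda$. The term of $v_tv_i$ in $P_{t-1}\le\beta|E(H)|$ has weight at least $(1+1/d)^{-(2d-1)}\lambda^{p-1}$. Hence, after passing to $W$ and $R$, the blue density is at most
\[
\Delta(d+1)(1+1/d)^{2d}\lambda^{-(p-1)}\beta|E(H)|\le8\Delta(d+1)\beta\lambda^{-(p-1)}|E(H)|,
\]
using $(1+1/d)^{2d}\le e^2<8$. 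The sizes satisfy $|W|\ge\lambda^dL/\Delta\ge m$ and $|R|\ge\lambda^{d}L/(d+1)\ge m$ since $d+1\le2\Delta$; here $|V_{i,t-1}|\ge\lambda^{d-1}|V_i|$ since $v_t$ is one more earlier neighbour. $W$ and $R$ lie in parts of the adjacent vertices $v_t,v_i$, and we label them $W_1<W_2$. If the process never fails, we reach step $n$ and obtain a red copy of $H^<$, a contradiction.
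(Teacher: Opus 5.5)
Your overall scheme matches the paper's. You use the same potential with the extra factor $\lambda^{b_{ij,t}}$ (your $e_{ij,t}$), the same cancellation of the $\lambda^{-1}$ lost when both ends of an edge inside $I_t$ shrink (paid for by the new common neighbour $v_t$), and the same failure-case estimate, which you carry out correctly.

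The gap is in the route you lead with for the success step: processing $I_t$ sequentially with already-restricted comparison sets. It breaks in two places.

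First, the failure case needs one fixed function $\Phi_i$ on subsets of $A=V_{i,t-1}$. The deleted sets are red neighbourhoods of different vertices $w\in W$, and the chain $(1+1/d)|D|\Phi_i(A)\le\sum_S|S|\Phi_i(S)\le|A|\Phi_i(A)$ relies on additivity of $|S|\Phi_i(S)$ for a single $\Phi_i$. In your scheme, the comparison sets $S_{i'}$ of earlier-processed $i'\in I_t$ lie in the red neighbourhood of $w$. So $\Phi_i$ changes with $w$, and whether index $i$ fails for $w$ even depends on which earlier suitable sets were picked. The pigeonhole and deletion steps therefore lose their meaning. This is exactly why the paper keeps all weights and comparison sets $V_{j,t-1}$ fixed during the step.

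Second, since you sum over all neighbours, an edge $v_iv_j$ with $i<j$ both in $I_t$ is counted in both $\Phi_i$ and $\Phi_j$. Then $\Phi_j(V_{j,t-1})$ contains the term $d_{\mathrm B}(V_{j,t-1},S_i)$, which is not part of $P_{t-1}$. Summing the suitability inequalities only yields $P_t\le P_{t-1}+E$, where $E$ collects, for each such edge, $(d+1)^{-1}$ times its weight times $d_{\mathrm B}(S_i,V_{j,t-1})$. Nothing in the argument pays for $E$.

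Your fallback, read with fixed comparison sets $V_{j,t-1}$, is the right repair and works as stated. Put each edge $v_iv_j$ with $i<j$ both in $I_t$ only into $\Phi_i$, with coefficient $1$. Then $\sum_{i\in I_t}\Phi_i(V_{i,t-1})$ is exactly the old contribution of the surviving edges meeting $I_t$.

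Let $x_{ij}$ be the term of such an edge in $\Phi_i(S_i)$. Using $d_{\mathrm B}(S_i,S_j)\le\lambda^{-1}d_{\mathrm B}(S_i,V_{j,t-1})$ and the extra factor $\lambda$ from $b_{ij,t}=b_{ij,t-1}+1$, its new contribution is at most $x_{ij}/(1+1/d)^2$. This is below its share $x_{ij}/(1+1/d)$ of $(1+1/d)^{-1}\sum_{i\in I_t}\Phi_i(S_i)$. Hence $P_t\le P_{t-1}$, and since $\Phi_i$ is now fixed, the deletion argument goes through verbatim.

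This differs from the paper only in bookkeeping. The paper keeps such an edge in both $\Phi_i$ and $\Phi_j$ with coefficient $\kappa_{ij}=(2(1+1/d))^{-1}$ and uses $\min\{x_{ij},x_{ji}\}\le(x_{ij}+x_{ji})/2$. Your asymmetric charging avoids the averaging and even leaves a spare factor $(1+1/d)^{-1}$. You should make this version the proof rather than a contingency.
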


\begin{proof}
Let $v_1,\dots,v_n$ be the fixed degeneracy ordering of $H$. If $v_i$ is the $k$th vertex in the given vertex order of $H^<$, then we let $V_i=U_k$. This assignment preserves the given vertex order. 
We again follow the embedding process from the proof of Lemma~\ref{lemma_one_step_density_coloring}. 
After embedding vertices $v_1,\ldots,v_t$, let
\[
d_{i,t}=|N_H(v_i)\cap\{v_1,\ldots,v_t\}|
\]
for every $i > t$ and
\[
b_{ij,t}=|N_H(v_i)\cap N_H(v_j)\cap\{v_1,\ldots,v_t\}|.
\]
for all distinct $i$ and $j$ with $t < i, j$.
At step $t$, we maintain vertices $w_i\in V_i$ for $i\leq t$ and candidate sets $V_{i,t}\subseteq V_i$ for $i>t$ satisfying the following four conditions:
\begin{itemize}
\item[(i)] If $i,j\leq t$ and $v_iv_j\in E(H)$, then $w_iw_j$ is red.
\item[(ii)] If $j\leq t<i$ and $v_iv_j\in E(H)$, then every edge from $w_j$ to $V_{i,t}$ is red.
\item[(iii)] For $i>t$, we have $|V_{i,t}|\geq\lambda^{d_{i,t}}|V_i|$.
\item[(iv)] We have
\[
P_t=\sum_{\substack{t<i<j\\v_iv_j\in E(H)}}
(1+1/d)^{-d_{i,t}-d_{j,t}}\lambda^{b_{ij,t}}
\dens_{\mathrm B}(V_{i,t},V_{j,t})
\leq \beta|E(H)|.
\]
\end{itemize}

Initially, set $V_{i,0}=V_i$. Conditions (i)--(iii) hold, and since $d_{i,0}=b_{ij,0}=0$ and the blue density between each pair of candidate sets is at most $\beta$, we have
\[
P_0=\sum_{\substack{i<j\\v_iv_j\in E(H)}}
\dens_{\mathrm B}(V_i,V_j)\leq \beta|E(H)|,
\]
proving (iv). If the process reaches step $n$, then (i) gives a red ordered copy of $H^<$.

Assume that step $t-1$ has been completed. 
We let $I_t=\{i>t:v_tv_i\in E(H)\}$, and observe that $|I_t|\leq\Delta$. 
Note that, this time, $I_t$ might not be an independent set. 
For $i\in I_t$ and $j>t$ with $v_iv_j\in E(H)$, we set $\kappa_{ij}=1$ if $j\notin I_t$ and $\kappa_{ij}=(2(1+1/d))^{-1}$ if $j\in I_t$. 
For each non-empty $S\subseteq V_{i,t-1}$, we also define
\[
\Phi_i(S)=\sum_{\substack{j>t\\v_iv_j\in E(H)}}
\kappa_{ij}(1+1/d)^{-d_{i,t-1}-d_{j,t-1}}
\lambda^{b_{ij,t-1}}\dens_{\mathrm B}(S,V_{j,t-1}).
\]
All weights $\kappa_{ij}(1+1/d)^{-d_{i,t-1}-d_{j,t-1}} \lambda^{b_{ij,t-1}}$ and comparison sets $V_{j,t-1}$ in this definition remain fixed during the step.

For a vertex $w\in V_{t,t-1}$, we call a non-empty set $S_i\subseteq V_{i,t-1}$ \emph{$w$-suitable} if all vertices of~$S_i$ are connected to $w$ by a red edge, $|S_i|\geq\lambda|V_{i,t-1}|$, and
\[
\Phi_i(S_i)\leq(1+1/d)\Phi_i(V_{i,t-1}).
\]

If some $w\in V_{t,t-1}$ admits a $w$-suitable $S_i$ for every $i\in I_t$, then we embed $v_t$ at $w_t=w$, set $V_{i,t}=S_i$ for all $i\in I_t$, and leave all other candidate sets unchanged. Properties (i) and (ii) follow from (ii) at step $t-1$ and the red-neighborhood condition on the sets $S_i$. For each $i\in I_t$, we have $d_{i,t}=d_{i,t-1}+1$, so
\[
|V_{i,t}|=|S_i|
\geq\lambda|V_{i,t-1}|
\geq\lambda^{d_{i,t-1}+1}|V_i|
=\lambda^{d_{i,t}}|V_i|.
\]
For all other unembedded vertices, both their candidate sets and the numbers of their embedded neighbors remain unchanged. Thus (iii) also holds.

To verify (iv), write
\[
x_{ij}=(1+1/d)^{-d_{i,t-1}-d_{j,t-1}}
\lambda^{b_{ij,t-1}}\dens_{\mathrm B}(S_i,V_{j,t-1})
\]
for an edge $v_iv_j$ with $i,j>t$ and an updated endvertex $v_i$. If only one endvertex of an edge is updated, say $v_i$, then $d_{i,t}=d_{i,t-1}+1$, while $d_{j,t}$ and $b_{ij,t}$ remain unchanged. Its new contribution is therefore $x_{ij}/(1+1/d)$.

If both endvertices are updated, then $v_t$ is adjacent to both $v_i$ and $v_j$, so $d_{i,t}=d_{i,t-1}+1$, $d_{j,t}=d_{j,t-1}+1$, and $b_{ij,t}=b_{ij,t-1}+1$. The new contribution of this edge is therefore
\[
(1+1/d)^{-d_{i,t-1}-d_{j,t-1}-2}
\lambda^{b_{ij,t-1}+1}\dens_{\mathrm B}(S_i,S_j).
\]
Since $S_j\subseteq V_{j,t-1}$, restricting the second set cannot increase the number of blue edges. Using $|S_j|\geq\lambda|V_{j,t-1}|$, we obtain
\[
\dens_{\mathrm B}(S_i,S_j)
\leq\frac{|V_{j,t-1}|}{|S_j|}
\dens_{\mathrm B}(S_i,V_{j,t-1})
\leq\lambda^{-1}\dens_{\mathrm B}(S_i,V_{j,t-1}).
\]
The factor $\lambda^{-1}$ cancels the additional factor $\lambda$ arising from the increment of $b_{ij,t-1}$. Hence the new contribution is at most $x_{ij}/(1+1/d)^2$. Interchanging $i$ and $j$ gives the bound $x_{ji}/(1+1/d)^2$ as well. Consequently, the new contribution is at most
\[
\frac{\min\{x_{ij},x_{ji}\}}{(1+1/d)^2}
\leq\frac{x_{ij}+x_{ji}}{2(1+1/d)^2}.
\]

Let $T_1$ and $T_2$ be the old total contributions of edges with exactly one and exactly two updated endvertices, respectively, excluding edges incident with $v_t$. More precisely, let
\[
T_1=
\sum_{\substack{t<i<j,\;v_iv_j\in E(H)\\
|\{i,j\}\cap I_t|=1}}
(1+1/d)^{-d_{i,t-1}-d_{j,t-1}}
\lambda^{b_{ij,t-1}}
\dens_{\mathrm B}(V_{i,t-1},V_{j,t-1})
\]
and
\[
T_2=
\sum_{\substack{t<i<j,\;v_iv_j\in E(H)\\
i,j\in I_t}}
(1+1/d)^{-d_{i,t-1}-d_{j,t-1}}
\lambda^{b_{ij,t-1}}
\dens_{\mathrm B}(V_{i,t-1},V_{j,t-1}).
\]
The coefficients $\kappa_{ij}$ allow us to bound the new total on these edges by
\[
(1+1/d)^{-1}\sum_{i\in I_t}\Phi_i(S_i).
\]
Indeed, an edge with exactly one updated endvertex appears once in this expression, contributing $x_{ij}/(1+1/d)$ when $i$ is its updated endvertex. An edge with both endvertices updated appears twice, and the choice $\kappa_{ij}=\kappa_{ji}=(2(1+1/d))^{-1}$ gives the combined contribution
\[
(1+1/d)^{-1}
\left(\frac{x_{ij}}{2(1+1/d)}
+\frac{x_{ji}}{2(1+1/d)}\right)
=\frac{x_{ij}+x_{ji}}{2(1+1/d)^2}.
\]
These are exactly the bounds obtained above.

By the suitability of the sets $S_i$, the new total on these edges is therefore at most
\[
(1+1/d)^{-1}\sum_{i\in I_t}\Phi_i(S_i)
\leq\sum_{i\in I_t}\Phi_i(V_{i,t-1})
=T_1+\frac{T_2}{1+1/d}
\leq T_1+T_2.
\]
Here the equality holds because each edge contributing to $T_1$ is counted once with coefficient $1$, whereas each edge contributing to $T_2$ is counted twice with coefficient $(2(1+1/d))^{-1}$. Other surviving contributions are unchanged, and the nonnegative contributions of edges incident with $v_t$ disappear. Thus $P_t\leq P_{t-1}\leq \beta|E(H)|$, proving (iv).

If the embedding fails at step $t$, every $w\in V_{t,t-1}$ has some $i\in I_t$ for which no $w$-suitable set exists. Since $|I_t|\leq\Delta$, the pigeonhole principle implies that we can fix such an $i$ and a set $W\subseteq V_{t,t-1}$ of size at least $|V_{t,t-1}|/\Delta$ for which the same obstruction holds for every $w\in W$.

Starting with $R=V_{i,t-1}$, whenever some $w\in W$ has at least $\lambda|V_{i,t-1}|$ red neighbors in $R$, we delete the set $S=R\cap N_R(w)$ from $R$. By the obstruction, $\Phi_i(S)>(1+1/d)\Phi_i(V_{i,t-1})$. The deleted sets are disjoint, and $|S|\Phi_i(S)$ is additive in $S$, since all weights and comparison sets remain fixed. Hence, if $D$ is their union, then $|D| = \sum_{S\text{ deleted}}|S|$ and
\[
(1+1/d)|D|\Phi_i(V_{i,t-1})
\leq \sum_{S\text{ deleted}}|S|\Phi_i(S)
\leq |V_{i,t-1}|\Phi_i(V_{i,t-1}).
\]
If $\Phi_i(V_{i,t-1})=0$, then no deletion is possible, as then any deleted non-empty subset $S$ of $V_{i,t-1}$ has $\Phi_i(S)=0$ and would be $w$-suitable.  
Otherwise, the last inequality and the strict inequalities for deleted sets imply $ |D|\leq\frac{|V_{i,t-1}|}{1+1/d} = \frac{d|V_{i,t-1}|}{d+1}. $ Thus, the remaining set satisfies $ |R| = |V_{i,t-1}| - |D|\geq\frac{|V_{i,t-1}|}{d+1}. $ When the process stops, every $w\in W$ has fewer than $\lambda|V_{i,t-1}|$ red neighbors in $R$. The density of red edges between $W$ and $R$ is therefore at most
\[
\frac{\lambda|V_{i,t-1}|}{|R|}
\leq(d+1)\lambda.
\]

The contribution of $v_tv_i$ to the potential in (iv) at step $t-1$, whose other terms are nonnegative, gives
\[
\dens_{\mathrm B}(V_{t,t-1},V_{i,t-1})
\leq |E(H)|\beta
(1+1/d)^{d_{t,t-1}+d_{i,t-1}}
\lambda^{-b_{ti,t-1}}.
\]
Here $d_{t,t-1}\leq d$, $d_{i,t-1}\leq d-1$, and $b_{ti,t-1}\leq p-1$. Since $(1+1/d)^{2d-1}<e^2<8$, the restriction to $W$ and $R$ yields
\[
\dens_{\mathrm B}(W,R)
\leq
\frac{|V_{t,t-1}||V_{i,t-1}|}{|W||R|}
\dens_{\mathrm B}(V_{t,t-1},V_{i,t-1})
\leq8\Delta(d+1)|E(H)|\beta\lambda^{-(p-1)}.
\]

Finally, by (iii), we have
\[
|W|\geq\lambda^dL/\Delta\geq m
\qquad\text{and}\qquad
|R|\geq\lambda^dL/(d+1)\geq m,
\]
using $d+1\leq2\Delta$. The sets $W$ and $R$ lie in parts corresponding to the adjacent vertices $v_t$ and $v_i$. Labeling them $W_1<W_2$ and adding the red and blue density bounds gives the required non-green density bound, which finishes the proof.
\end{proof}

\begin{lemma}\label{lemma_multipartite_step_general}
For a positive integer $m$ and $c\in(0,1)$, let $U_1<\cdots<U_\chi$ be the parts of an ordered complete $\chi$-partite graph, each of size at least $\tfrac12(A/c)^dm$, with edges colored red, blue, and green. If the blue density between each two parts is at most $(c/A)^p$, and there is no red ordered copy of $H^<$, then there are sets $W_1<W_2$ in different parts, each of size at least $m$, with non-green density at most $c$.
\end{lemma}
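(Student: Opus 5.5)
The plan is to reduce the statement to Lemma~\ref{lemma_one_step_density_coloring_general}, in the same way that Lemma~\ref{lemma_one_step_density_coloring_non_bipartite} was reduced to the bipartite embedding argument.

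\emph{Step 1 (subdivision).} Apply Lemma~\ref{lemma_divide_into_intervals_multipartite} with $r=\chi$, temporarily identifying red and green. This splits each part $U_j$ into $n$ ordered blocks $U_{j,1}<\cdots<U_{j,n}$. The blue density between blocks of different parts is then at most $\beta=2\chi^2n(c/A)^p$. The block size is $L\geq 2m\Delta\lambda^{-d}$, so the part sizes must satisfy
\[
\chi n\lceil 2m\Delta\lambda^{-d}\rceil\leq\tfrac12(A/c)^dm .
\]

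\emph{Step 2 (embedding).} Fix an interval colouring of $H^<$ with $\chi$ classes. Send the vertex of $H^<$ that is the $k$th vertex of the $j$th interval to the block $U_{j,k}$. This gives $n$ ordered blocks, in the order of $H^<$. Adjacent vertices of $H^<$ lie in different intervals, so only the densities between blocks of different parts matter, and the proof of Lemma~\ref{lemma_one_step_density_coloring_general} uses the blue-density hypothesis only on pairs of adjacent vertices. Hence that lemma (or its proof verbatim) applies. It yields sets $W_1<W_2$, each of size at least $m$, lying in blocks of adjacent vertices, hence in different parts. Their non-green density is at most
\[
(d+1)\lambda+8\Delta(d+1)\beta\lambda^{-(p-1)}|E(H)|.
\]

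\emph{Step 3 (choice of $\lambda$).} Take $\lambda=c/A$. The first term is $(d+1)c/A\leq c/2$, because $A\geq 2(d+1)$; this holds since $8\Delta^2\chi^2n^2\geq1$. For the second term, use $|E(H)|\leq \Delta n/2$ and $\beta\lambda^{-(p-1)}=2\chi^2n\,c/A$. The second term is then at most
\[
8\chi^2\Delta^2n^2(d+1)\,c/A .
\]
This is at most $c/2$ provided $A\geq 16(d+1)\Delta^2\chi^2n^2$.

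\emph{Main obstacle.} The definition of $A$ only gives $A\geq 2(d+1)(8\Delta^2\chi^2n^2)^{1/p}$, which is far smaller than that requirement. So the naive choice $\lambda=c/A$ fails, and a more balanced choice is needed. The idea is to let the blue term carry the power $p$: choose $\lambda$ so that $\lambda^{p}\approx (c/A)^p\cdot 8\Delta^2\chi^2n^2$ absorbs the polynomial factor. Concretely, try
\[
\lambda=\frac{c}{2(d+1)}\quad\text{together with}\quad \beta\lambda^{-(p-1)}\ \text{bounded by}\ \frac{(c/A)^p\,2^{p}(d+1)^{p}\,8\Delta^2\chi^2n^2}{c^{p-1}\cdot 2^p(d+1)^{p-1}}.
\]
The bound $A^p\geq 2^p(d+1)^p\,8\Delta^2\chi^2n^2$ then makes the blue term at most $c/(2(d+1))\cdot(\text{constant})\leq c/2$. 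This is exactly what the $1/p$-th root in $A$ is designed for. With this choice the size requirement reads
\[
L\geq 2m\Delta\bigl(2(d+1)/c\bigr)^d .
\]
It follows from $\tfrac12(A/c)^d m\geq \chi n\lceil 2m\Delta(2(d+1)/c)^d\rceil$, using $A\geq 2(d+1)\cdot(8\Delta^2\chi^2n^2)^{1/p}$. The remaining factor $(8\Delta^2\chi^2n^2)^{d/p}\geq 8\Delta^2\chi^2n^2$ (as $p\leq d$) covers $4\chi n\Delta$.

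The delicate point is precisely balancing the constants, including the factor $2\chi^2n$ from Step~1 and the factor $8$, against the $1/p$-power in $A$. If this is too tight, the fallback is to weaken the red bound to $(d+1)\lambda\leq c/2$ with $\lambda=c/(2(d+1))$ exactly, and to check all remaining constants directly.
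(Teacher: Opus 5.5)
Once you discard the false start $\lambda=c/A$, your argument is the paper's proof: subdivide with Lemma~\ref{lemma_divide_into_intervals_multipartite} (block size $\lceil 2m\Delta\lambda^{-d}\rceil$, giving $\beta=2\chi^2n(c/A)^p$), assign blocks by interval classes, and apply Lemma~\ref{lemma_one_step_density_coloring_general} with $\lambda=c/(2(d+1))$; your size check is correct, and the paper adds green edges inside original parts where you note that only adjacent pairs matter, which is equivalent. The only flaw is your displayed bound for $\beta\lambda^{-(p-1)}$, which is garbled; computed directly, $8\Delta(d+1)|E(H)|\beta\lambda^{-(p-1)}\leq 8\Delta(d+1)\cdot\frac{n\Delta}{2}\cdot 2\chi^2n\,(c/A)^p(2(d+1)/c)^{p-1}=4\Delta^2\chi^2n^2(2(d+1))^p c/A^p\leq c/2$ by the definition of $A$, so nothing is tight and no fallback is needed.
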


\begin{proof}
Set $\lambda=c/(2(d+1))$. Since $p\leq d$, the definition of $A$ gives
\[
A^d\geq(2(d+1))^d(8\Delta^2\chi^2n^2)^{d/p}
\geq8\Delta\chi n(2(d+1))^d.
\]
Consequently,
\[
\chi n\left\lceil2m\Delta\lambda^{-d}\right\rceil
\leq4\chi nm\Delta\lambda^{-d}
\leq\tfrac12(A/c)^dm.
\]
We apply Lemma~\ref{lemma_divide_into_intervals_multipartite} with $r=\chi$ and block-size parameter $\lceil2m\Delta\lambda^{-d}\rceil$, temporarily identifying red and green. 
We obtain $n$ ordered intervals in each part, each of size at least $2m\Delta\lambda^{-d}$, with blue density at most $\beta=2\chi^2n(c/A)^p$ between intervals from different parts.

Fix a partition of $V(H^<)$ into $\chi$ independent intervals, and assign distinct intervals in $U_j$, in order, to the vertices of its $j$th interval class. 
We add green edges between selected intervals from the same original part. 
Their blue density is zero, and no red ordered copy of $H^<$ is introduced. Lemma~\ref{lemma_one_step_density_coloring_general} therefore gives two sets of size at least $m$ whose non-green density is at most
\[
\begin{aligned}
(d+1)\lambda+8\Delta(d+1)|E(H)|\beta\lambda^{-(p-1)}
&\leq\frac c2+
\frac{4\Delta^2\chi^2n^2(2(d+1))^p}{A^p}c\\
&\leq c,
\end{aligned}
\]
where we used $|E(H)|\leq n\Delta/2$ and the definition of $A$. The two sets correspond to adjacent vertices of $H^<$, so they lie in different interval classes and hence in different original parts. Their colors are therefore unchanged, proving the lemma.
\end{proof}

We now prove the main technical step. We recursively define integers $b_2=d$ and
\begin{equation}\label{eq-recurrence}
b_q=pzb_{q-1}+d
\end{equation}
for every $q \geq 3$. Moreover, for integers $s \in [z]$ and $m\geq1$, and a real number $c \in (0,1)$, we set
\begin{equation}\label{eq-threshold}
 N_{q,s,m,c}=A^{3b_qs}c^{-b_qs}2^{2b_qs^2}m.
\end{equation}

\begin{lemma}\label{lemma_multipartite_general}
Any $q$-coloring of $K_N^<$ with $N\geq N_{q,s,m,c}$ and with no ordered copy of $H^<$ in colors $1,\ldots,q-1$ contains $2^s$ ordered sets of size at least $m$ whose pairwise densities in the union of these $q-1$ colors are at most $c$.
\end{lemma}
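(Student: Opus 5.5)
We argue by double induction on $q$ and $s$, following the proof of Lemma~\ref{lemma_multipartite} step by step. Lemma~\ref{lemma_multipartite_step_general} replaces Lemma~\ref{lemma_one_step_density_coloring_non_bipartite}, and the recurrence~\eqref{eq-recurrence} replaces the exponent $q^2dz^{q-2}$.

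\emph{Base case $q=2$.} Here $b_2=d$, and we apply Lemma~\ref{lemma_conlon_sets} to the graph of color~$1$ with the given $s$ and $c$. It suffices to check
\[
N_{2,s,m,c}=A^{3ds}c^{-ds}2^{2ds^2}m\geq c^{-sd}(2^{sd+1}\Delta n)^s m .
\]
This follows from $A\geq 2(d+1)(8\Delta^2\chi^2n^2)^{1/p}$. The factor $2^{2ds^2}$ absorbs $2^{s^2d+s}$, and $A^{3ds}$ absorbs $(\Delta n)^s$ with room to spare.

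\emph{Case $q\geq3$, $s=1$.} Set $c'=(c/A)^p$ and $m'=\lceil \tfrac12(A/c)^dm\rceil$. Identify colors $q-1$ and $q$, and apply the induction hypothesis for $q-1$ colors with $s=z$ and parameters $m',c'$. This gives $2^z\geq\chi$ ordered sets, and we keep the first $\chi$. Recolor $[q-2]$ blue, color $q-1$ red, and color $q$ green. Lemma~\ref{lemma_multipartite_step_general} then yields two sets of size at least $m$ with density at most $c$ in colors $[q-1]$. The remaining work is the threshold inequality $N_{q-1,z,m',c'}\leq N_{q,1,m,c}$. Its left side is
\[
A^{3b_{q-1}z}(A/c)^{pb_{q-1}z}2^{2b_{q-1}z^2}(A/c)^d m,
\]
up to a factor of $2$. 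Using $2^{2z}\leq A$, the power of $c^{-1}$ is exactly $pzb_{q-1}+d=b_q$. The power of $A$ is at most $3b_{q-1}z+pzb_{q-1}+b_{q-1}z+d+1$, and we need this to be at most $3b_q=3pzb_{q-1}+3d$. This holds because $p\geq2$ gives $4b_{q-1}z\leq 2pzb_{q-1}$, and $d+1\leq 3d$. The rounding factor $2$ is absorbed by the spare $2^{2b_q}$. This exponent bookkeeping is where $p$ enters: the blue density requirement $(c/A)^p$ in Lemma~\ref{lemma_multipartite_step_general} forces the factor $p$ in the recurrence. It is the main point to get right.

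\emph{Case $s\geq2$.} We copy the splitting argument of Lemma~\ref{lemma_multipartite} verbatim. Let $M=2\lceil N_{q,s-1,m,c}\rceil$ and apply the case $s=1$ with parameters $M$ and $c/2^{s+1}$, obtaining $U_1<U_2$. Delete the high-degree vertices of $U_1$ and apply induction with $s-1$ inside the remaining set. Then clean $U_2$ against each of the resulting $2^{s-1}$ sets and apply induction with $s-1$ inside the cleaned part of $U_2$. The only check is the threshold inequality
\[
N_{q,1,M,c/2^{s+1}}\leq 4A^{3b_q}c^{-b_q}2^{b_q(s+1)+2b_q}N_{q,s-1,m,c}\leq N_{q,s,m,c}.
\]
Comparing exponents, the right-hand inequality requires
\[
A^{3b_q}c^{-b_q}2^{b_q(s+3)+2}\leq A^{3b_q}c^{-b_q}2^{2b_q(2s-1)}.
\]
This holds since $b_q(s+3)+2\leq 2b_q(2s-1)$ for $s\geq2$ and $b_q\geq2$, and the rest of the argument goes through exactly as before.
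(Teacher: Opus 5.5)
Your proof is correct and follows the paper's argument: the same double induction, the same choice $c'=(c/A)^p$ and $m'=\lceil\frac12(A/c)^dm\rceil$ fed into Lemma~\ref{lemma_multipartite_step_general}, and the same splitting step with the same threshold check $b_q(3s-5)\geq 2$. The one difference is the base case. You cite Lemma~\ref{lemma_conlon_sets} for $q=2$ and every $s$, which is valid because $p\leq d$ gives $A^{d}\geq 8\Delta^2\chi^2n^2\geq\Delta n$. The paper instead proves $q=2$, $s=1$ by a direct greedy embedding and lets the splitting step, which it runs for every $q\geq2$, handle $s\geq2$.
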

\begin{proof}
We use a double induction on $q$ and on $s$. For $q=2,s=1$, regard color $1$ as red and partition $[N]$ into $n$ consecutive sets, each of size at least $N/(2n)$, assigned to the vertices of $H^<$ in their given order. 
We then greedily embed in the degeneracy ordering $v_1,\ldots,v_n$, keeping candidate sets of size at least $c^{d_{i,t}}N/(2n)$. 
At step $t$, a choice of $w$ in the current candidate set for $v_t$ is \emph{admissible} if, for every unembedded neighbor $v_i$ of $v_t$, at least a proportion $c$ of its current candidate set is adjacent to $w$ by a red edge.

If no choice of $w$ is admissible, write $V_{i,t-1}$ for the current candidate set of each unembedded vertex $v_i$. Then, for some unembedded neighbor $v_i$ of $v_t$, every vertex of a set $W$ of size at least $|V_{t,t-1}|/\Delta$ in the current cell for $v_t$ has fewer than $c|V_{i,t-1}|$ red neighbors in $V_{i,t-1}$. 
The pair $W,V_{i,t-1}$ then has red density at most $c$ and both sets have size at least $c^dN/(2\Delta n)$. 
This is at least $m$, since $A^d\geq8\Delta\chi n(2(d+1))^d\geq2\Delta n$ and
\[
 N \geq N_{2,1,m,c}=A^{3d}c^{-d}2^{2d}m
 \geq 2\Delta n c^{-d}m.
\]
Since there is no red copy of $H^<$, there is a step with no admissible $w$. This proves the initial case.

Assume that $q\geq3$ and first let $s=1$. We set $c'=(c/A)^p$ and $ m'=\left\lceil\tfrac12(A/c)^d m\right\rceil  \leq(A/c)^d m$. We identify colors $q-1$ and $q$, leaving only $q-1$ colors, and apply the induction hypothesis with $s=z$, $m'$, and $c'$. The necessary size is available, since using $A\geq2^{2z}$ and $p+4\leq3p$, we have
\begin{align*}
 N_{q-1,z,m',c'}
 &\leq A^{(p+3)b_{q-1}z+d}c^{-pzb_{q-1}-d}2^{2b_{q-1}z^2}m\\
 &\leq A^{(p+4)b_{q-1}z+d}c^{-b_q}m\\
 &\leq A^{3b_q}c^{-b_q}m
 \leq N_{q,1,m,c}.
\end{align*}
We keep the first $\chi$ of the resulting $2^z$ ordered sets, and we color all edges in original colors $1,\ldots,q-2$ blue, those in color $q-1$ red, and those in color $q$ green. Since there is no red ordered copy of $H^<$, Lemma~\ref{lemma_multipartite_step_general} gives two sets of size at least $m$ with density at most $c$ in colors $1,\ldots,q-1$, finishing the argument for $s=1$.

For $s\geq2$, the following argument applies to every $q\geq2$. We set
\[
 M=2\lceil N_{q,s-1,m,c}\rceil
 \leq4 N_{q,s-1,m,c}.
\]
Substituting into~\eqref{eq-threshold} and using $M\leq4N_{q,s-1,m,c}$, we obtain
\begin{align*}
N_{q,1,M,c/2^{s+1}}
 &= A^{3b_q}(c/2^{s+1})^{-b_q}2^{2b_q}M\\
 &= A^{3b_q}c^{-b_q}2^{b_q(s+3)}M\\
 &\leq 4\cdot A^{3b_q}c^{-b_q}2^{b_q(s+3)}
       \cdot A^{3b_q(s-1)}c^{-b_q(s-1)}
       2^{2b_q(s-1)^2}m\\
 &= 4\cdot A^{3b_qs}c^{-b_qs}
       2^{b_q(2(s-1)^2+s+3)}m\\
 &= 4\cdot A^{3b_qs}c^{-b_qs}
       2^{2b_qs^2-b_q(3s-5)}m\\
 &= 4\cdot2^{-b_q(3s-5)}N_{q,s,m,c}\\
 &\leq N_{q,s,m,c}.
\end{align*}
For the last inequality, we used $s\geq2$ and $b_q\geq2$, which imply $b_q(3s-5)\geq2$.
The case $s=1$ therefore gives $U_1<U_2$, each of size at least $M$, with density at most $c/2^{s+1}$ in colors $1,\ldots,q-1$.

For this paragraph, call edges in the colors $1,\dots,q-1$ \emph{bad}. 
We delete from $U_1$ the vertices with more than $(c/2^s)|U_2|$ bad neighbors in $U_2$. 
Note that at most half the vertices are deleted. 
We then apply induction on $s$ inside the remaining set to obtain $2^{s-1}$ ordered sets $X_1<\cdots<X_{2^{s-1}}$ of size at least $m$, with pairwise density of bad edges at most $c$. 
For each $j$, we delete from $U_2$ the vertices having more than $c|X_j|$ bad neighbors in $X_j$. 
The degree bound on vertices of $X_j$ shows that this deletes at most $|U_2|/2^s$ vertices per $j$. 
Consequently, at least half of the vertices in $U_2$ survive. 
We apply the induction hypothesis inside the surviving set to obtain $2^{s-1}$ further ordered sets. 
Every vertex in these sets has at most $c|X_j|$ bad neighbors in each $X_j$. 
Thus the density bound holds between the two families, as well as within them. 
All recursive applications have at least $M/2\geq N_{q,s-1,m,c}$ vertices available. 
This completes the proof.
\end{proof}

We now obtain the final estimate by forcing a complete ordered multipartite graph in the last color.

\begin{thm}
\label{thm_d_degenerated_multipartite_general}
For all integers $h\geq1$ and $q\geq2$,
\[
 r_<(\underbrace{H^<,\dots,H^<}_{q-1\text{ copies}},K_h^<(\chi))
 \leq4\chi h\,A^{3b_qz}(8\chi^2h)^{b_qz}2^{2b_qz^2}.
\]
\end{thm}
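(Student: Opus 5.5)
The plan mirrors the proof of Theorem~\ref{thm_d_degenerated_multipartite}, now using Lemma~\ref{lemma_multipartite_general} in place of Lemma~\ref{lemma_multipartite}. Set $c=1/(8\chi^2h)$, $s=z=\lceil\log\chi\rceil$, and $m=4\chi h$. By the definition~\eqref{eq-threshold},
\[
N_{q,z,4\chi h,c}=A^{3b_qz}c^{-b_qz}2^{2b_qz^2}\cdot4\chi h
=4\chi h\,A^{3b_qz}(8\chi^2h)^{b_qz}2^{2b_qz^2}.
\]
This is exactly the claimed bound, so no extra slack is needed. Let $N$ be this quantity, rounded up to an integer if necessary; this does no harm, since Lemma~\ref{lemma_multipartite_general} only requires $N\geq N_{q,s,m,c}$. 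Fix an arbitrary $q$-coloring of $K_N^<$.

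If some color $i\in[q-1]$ contains an ordered copy of $H^<$, we are done. Otherwise, the hypotheses of Lemma~\ref{lemma_multipartite_general} hold with $s=z\in[z]$, $m=4\chi h$, and $c\in(0,1)$. The lemma yields $2^z\geq\chi$ ordered sets, each of size at least $4\chi h$, whose pairwise density in the union of colors $1,\dots,q-1$ is at most $1/(8\chi^2h)$. We keep the first $\chi$ of them, $W_1<\cdots<W_\chi$.

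Now let $G^<$ be the ordered graph on $[N]$ formed by the edges of color $q$. Its non-edges between $W_i$ and $W_j$ are exactly the edges colored with $[q-1]$, so their density is at most $1/(8\chi^2h)$. Lemma~\ref{lem_conlon_final}, applied with $n=h$, then gives an ordered copy of $K_h^<(\chi)$ in color $q$. This completes the argument.

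There is no real obstacle here: all the work is in Lemma~\ref{lemma_multipartite_general}. The only care needed is checking that its parameter ranges are met ($s=z\geq1$ since $\chi\geq2$, and $c<1$) and that the threshold computation matches the displayed bound exactly.
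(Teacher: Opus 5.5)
Your proposal is correct and is essentially the paper's proof: apply Lemma~\ref{lemma_multipartite_general} with $s=z$, $c=(8\chi^2h)^{-1}$, and $m=4\chi h$, keep $\chi$ of the $2^z$ resulting sets, and finish with Lemma~\ref{lem_conlon_final} applied to the color-$q$ graph with parameter $n=h$. Your check that $N_{q,z,4\chi h,c}$ equals the displayed bound exactly is the only computation needed. That quantity is already an integer, since $A$ is defined with a ceiling, so no rounding is required.
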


\begin{proof}
We apply Lemma~\ref{lemma_multipartite_general} with the choice $s=z$, $c=(8\chi^2h)^{-1}$, and $m=4\chi h$ to a coloring on
\[
 4\chi h\,A^{3b_qz}(8\chi^2h)^{b_qz}2^{2b_qz^2}
\]
vertices. If none of the first $q-1$ colors contains an ordered copy of $H^<$, then we retain $\chi$ of the resulting ordered sets $W_1<\cdots<W_\chi$. Each has size at least $4\chi h$, and their pairwise densities in colors other than $q$ are at most $(8\chi^2h)^{-1}$. Applying Lemma~\ref{lem_conlon_final} to the graph of color-$q$ edges, with its parameter $n=h$, gives an ordered copy of $K_h^<(\chi)$ and finishes the proof.
\end{proof}

Finally, we deduce Theorem~\ref{thm_d_degenerated_multipartite_result} by a straightforward calculation.

\begin{proof}[Proof of Theorem~\ref{thm_d_degenerated_multipartite_result}]
If $\Delta=1$, then we are done by Theorem~\ref{thm_triangleFree} with $d=1$, since $21q^2\leq60\cdot2^{q-1}\leq60dp^{q-2}$ .
 We may therefore assume $\Delta\geq2$. We apply Theorem~\ref{thm_d_degenerated_multipartite_general} with $h=n$. Since $H^<$ is an ordered subgraph of $K_n^<(\chi)$, we obtain
\[
r_<(H^<;q)\leq4\chi n\,A^{3b_qz}(8\chi^2n)^{b_qz}2^{2b_qz^2}.
\]

We first estimate the base $A$. Using $d+1\leq n$, $\Delta,\chi\leq n$, and $p\geq2$, we have $ 2^{2z}<4\chi^2\leq4n^2$ and
\[
 2(d+1)(8\Delta^2\chi^2n^2)^{1/p}\leq4\sqrt2\,n^4.
\]
It follows that $A\leq4\sqrt2\,n^4+1\leq6n^4\leq n^6$. We also have $z\leq2\log{n}$, $\log_2(8\chi^2n)\leq6\log{n}$, and $\log_2(4\chi n)\leq4\log{n}$. Since $b_qz\geq 2$, taking logarithms leads to
\begin{align*}
 \log r_<(H^<;q)
 &\leq18b_qz\log{n}+b_qz\log_2(8\chi^2n)+2b_qz^2
   +\log(4\chi n)\\
 &\leq30b_qz\log{n}.
\end{align*}

The recurrence~\eqref{eq-recurrence} and $pz\geq2$ give $b_q=d\sum_{j=0}^{q-2}(pz)^j  \leq2d(pz)^{q-2}$. Consequently,
\[
 \log_2 r_<(H^<;q)
 \leq60dp^{q-2}z^{q-1}\log_2n.
\]
\end{proof}

\printbibliography[heading=bibintoc]

\end{document}